\documentclass[10pt,runningheads,a4paper]{llncs}
\usepackage{graphicx}
\usepackage{amsmath}
\usepackage{amssymb}
\usepackage{color}
\numberwithin{theorem}{subsection}
\numberwithin{corollary}{section}
\numberwithin{definition}{subsection}
\begin{document}
\title{On Topological Properties of Third type of Hex Derived Networks}
\author{Haidar Ali$^1$, Muhammad Ahsan Binyamin$^1$, Muhammad Kashif Shafiq$^1$}

\institute{$^1$Department of Mathematics, \\Government College University,\\
Faisalabad, Pakistan\\
E-mail: haidarali@gcuf.edu.pk, \{ahsanbanyamin, kashif4v\}@gmail.com}
\authorrunning{H. Ali, M. A. Binyamin, M. K. Shafiq}
\titlerunning{On Topological Properties of Third type of Hex Derived Networks} \maketitle

\markboth{\small{H. Ali, , M. A. Binyamin, M. K. Shafiq}} {\small{On Topological Properties of Third type of Hex Derived Networks}}

\begin{abstract}
In chemical graph theory, a topological index is a numerical representation of a chemical network while a topological descriptor correlates certain physico-chemical characteristics of underlying chemical compounds besides its chemical representation. Graph plays an vital role in modeling and designing any chemical network.\\
F. Simonraj et al. derived new third type of Hex derived networks \cite{Simonraj new}. In our work, we discuss the third type of hex derived networks $HDN3(r)$, $THDN3(r)$ and $RHDN3(r)$ and computed exact results for topological indices which are based on degrees of end vertices.
\end{abstract}
\par

\vspace{.3cm} {\bf Keywords:} General Randi\'{c} index, Harmonic index, Augmented Zagreb index, Atom-bond connectivity $(ABC)$ index, Geometric-arithmetic
$(GA)$ index, Third type of Hex Derived Networks, $HDN3(r)$, $THDN3(n)$, $RHDN3(n)$.

\section[Introduction and preliminary results]{Introduction and preliminary results}
Topology indices application is now a standard procedure in studying chemical information, structure properties like QSAR and QSPR. Biological indicators such as the Randi$\acute{c}$ Index, Zagreb Index, the Weiner Index and the Balaban index are used and predict and study the physical and chemical properties. There is too much research has been published in this field in the last few decades. The topological index is a numeric quantity associated with chemical constitutions purporting the correlation of chemical structures with many physicochemical properties, chemical reactivity or biological activity. Topological indices are made on the grounds of the transformation of a chemical network into a number that characterizes the topology of the chemical network.  Some of the major types of topological indices of graphs are distance-based topological indices, degree-based topological indices, and counting-related topological indices.
For any Graph $\mathrm{G}=(V, E)$ where V is be the vertex set and E to be the edge set of $\mathrm{G}$. The degree $\kappa(x)$ of vertex x is the amount of edges of $\mathrm{G}$ episode with x. A graph can be spoken by a polynomial, a numerical esteem or by network shape.

Hexagonal mesh was derived by Chen et al. \cite{Chen}.  A set of triangles, made up a \emph{hexagonal mesh} as shown in Fig. 1. Hexagonal mesh with dimension 1 does not exist. Composition of six triangles made a 2-dimensional hexagonal mesh $HX(2)$ (see Fig. 1(1)). By adding a new layer of triangles around the boundary of $HX(2)$, we have a 3-dimensional hexagonal mesh $HX(3)$ (see Fig. 1(2)). Similarly, we form $HX(n)$ by adding n layers around the boundary of each proceeding hexagonal mesh.\\
\noindent\textbf{Drawing algorithm of $HDN3$ networks}\\
Step-1: First we draw a Hexagonal network of dimension $r$.\\
Step-2: Replace all $K_{3}$ subgraph in to Planar octahedron $POH$ once. The resulting graph is called $HDN3$ (see Fig. \ref{fig 2}) networks.\\
Step-3: From $HDN3$ network, we can easily form $THDN3$ (see Fig. \ref{fig 3})and $RHDN3$ (see Fig. \ref{fig 4}).
\begin{figure}
\centering
  \includegraphics[width=8cm]{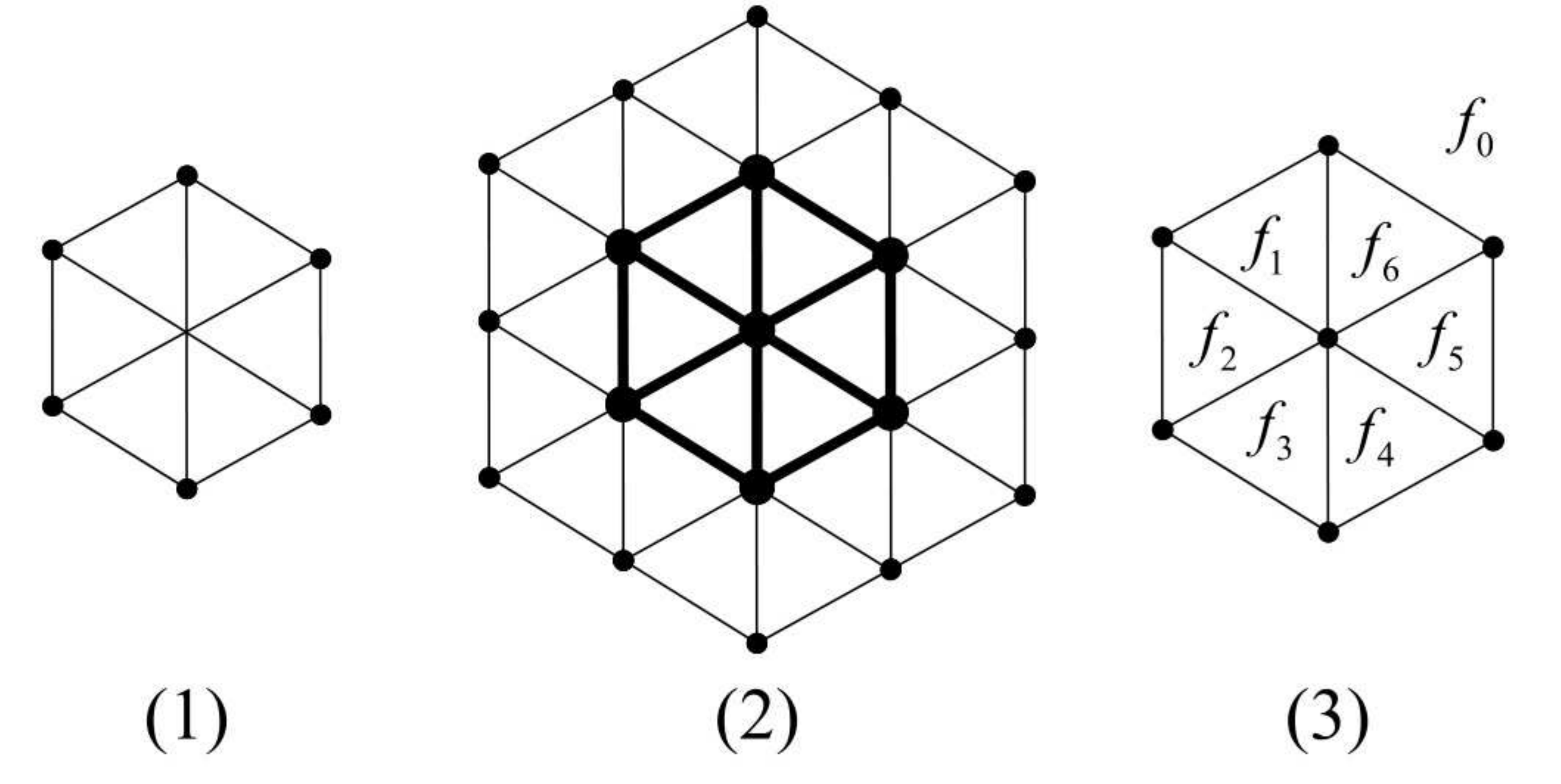}\\
  \caption{Hexagonal meshes: (1) HX(2) , (2) HX(3), and (3) all faces in HX(2).}\label{fig 1}
\end{figure}

In this paper, we consider $\mathrm{G}$ as a network with $V(\mathrm{G})$ is the set of vertices and edge set $E(\mathrm{G})$, the degree of any vertex $\acute{p}\in V(\mathrm{G})$ is denoted by $\kappa(\acute{p})$. The $S_{\acute{p}}={\sum\limits_{\acute{p}\in N_{\mathrm{G}}(\acute{p})}deg(\acute{q})}$ where $N_{\mathrm{G}}(\acute{p})=\{\acute{q}\in V(\mathrm{G})\mid \acute{p}\acute{q} \in E(\mathrm{G})\}$.

The beginning of degree based topological indices starts from \emph{Randi\'{c}} index \cite{Randic} denoted by $R_{-\frac{1}{2}}(\mathrm{G})$ and acquainted by Milan Randi\'{c} and written as
\begin{equation}
 R_{-\frac{1}{2}}(\mathrm{G})=\sum\limits_{\acute{p}\acute{q} \in \mathrm{E}(\mathrm{G})}\frac{1}{\sqrt{\kappa(\acute{p})\kappa(\acute{q})}}.
\end{equation}
B. Furtula and Ivan Gutman \cite{Furtula2} introduced forgotten topological index (also called F-Index) defined as
\begin{equation}
F(G)=\sum_{\acute{p}\acute{q} \in \mathrm{E}(\mathrm{G})}((\kappa(\acute{p}))^2+(\kappa(\acute{q}))^2).
\end{equation}
Another topological index in view of the level of the vertex is the Balaban index \cite{Balaban1,Balaban2}. This index for a graph $\mathrm{G}$ of order $`n'$, size $`m'$ is characterized as
\begin{equation}
 J(\mathrm{G})=\bigg(\frac{m}{m-n+2}\bigg)\sum_{\acute{p}\acute{q} \in \mathrm{E}(\mathrm{G})}\frac{1}{\sqrt{\kappa(\acute{p})\times \kappa(\acute{q})}}.
\end{equation}
Ranjini et al. \cite{Ranjini} reclassified the Zagreb indices is to be specific the re-imagined in the first place, second and third Zagreb indices for graph $\mathrm{G}$ as
\begin{equation}
ReZG_{1}(\mathrm{G})=\sum_{\acute{p}\acute{q} \in \mathrm{E}(\mathrm{G})}\bigg(\frac{\kappa(\acute{p})\times \kappa(\acute{q})}{\kappa(\acute{p})+ \kappa(\acute{p})}\bigg),
\end{equation}
\begin{equation}
ReZG_{2}(\mathrm{G})=\sum_{\acute{p}\acute{q} \in \mathrm{E}(\mathrm{G})}\bigg(\frac{\kappa(\acute{p})+\kappa(\acute{q})}{\kappa(\acute{p})\times \kappa(\acute{p})}\bigg),
\end{equation}
\begin{equation}
ReZG_{3}(\mathrm{G})=\sum_{\acute{p}\acute{q} \in \mathrm{E}(\mathrm{G})}(\kappa(\acute{p})\times\kappa(\acute{q}))(\kappa(\acute{p})+ \kappa(\acute{p}).
\end{equation}
Only $ABC_{4}$ and $GA_{5}$ indices can be computed if we are able to find the edge partition of these networks, based on sum of the degrees of end vertices of each edge. The fourth version of $ABC$ index is introduced by Ghorbani \emph{et al.} \cite{Graovac1} and defined as
\begin{equation}
ABC_{4}(\mathrm{G})=\sum_{\acute{p}\acute{q} \in \mathrm{E}(\mathrm{G})}\sqrt{\frac{S_{\acute{p}}+S_{\acute{q}}-2}{S_{\acute{p}}S_{\acute{q}}}}.
\end{equation}
The fifth version of $GA$ index is proposed by Graovac \emph{et al.}\cite{Graovac2} and defined as
\begin{equation}
GA_{5}(\mathrm{G})=\sum_{\acute{p}\acute{q} \in \mathrm{E}(\mathrm{G})}\frac{2\sqrt{S_{\acute{p}}S_{\acute{q}}}}{(S_{\acute{p}}+S_{\acute{q}})}.
\end{equation}

\section[Main Results for third type of Hex Derived networks]{Main Results for third type of Hex Derived networks}
F. Simonraj et al. \cite{Simonraj new} derived new third type of Hex derived networks and found metric dimension of $HDN3$ and $PHDN3$. In this work, we discuss the newly derived third type of hex derived networks and compute exact results for degree based topological indices. In these days, there is an extensive research activity on these topological indices and their variants see, \cite{Baca1,baig1,baig,Caporossi,Imran,Imran1,Imran2,chachaIranmanesh,Liu,Liu1,Simonraj}. For basic definitions, notations, see \cite{Bondy,Diudea,Gutman,Gutman1,Wiener}

\subsection{Results for Hex Derived network of type 3, $HDN3(r)$}
In this section, we discuss the newly derived third type of hex derived network and compute numerical and exact results for forgotten index, Balaban index, reclassified the Zagreb indices, forth version of $ABC$ index and fifth version of $GA$ index for the very first time.

\begin{theorem}
Consider the Hex Derived network of type 3 $HDN3(n)$, then its forgotten index is equal to
\begin{equation*}
F(HDN3(n))=6(5339-8132n+3108n^2)
\end{equation*}
\end{theorem}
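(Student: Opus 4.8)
The plan is to reduce the computation of $F(HDN3(n))$ to an edge-partition argument. Since the forgotten index depends only on the degrees of the two endpoints of each edge, it suffices to sort the edges of $HDN3(n)$ by the pair $(\kappa(\acute{p}),\kappa(\acute{q}))$ of end-vertex degrees and to count how many edges fall into each class, each count being a polynomial in $n$.

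First I would pin down the degree sequence. Starting from the hexagonal mesh $HX(n)$ and replacing every triangular face $K_{3}$ by a planar octahedron $POH$, each replacement retains the three original vertices, adds three new vertices (the inner triangle of the octahedron), and inserts nine new edges; consequently a vertex lying in $t$ triangles of $HX(n)$ gains $2t$ edges to new vertices on top of its original neighbours, so its degree becomes $\deg_{HX}+2t$. Working this out by position yields four degree values in $HDN3(n)$: every new vertex has degree $4$; an interior mesh vertex (originally degree $6$, lying in $6$ triangles) has degree $6+12=18$; a boundary-edge vertex has degree $4+6=10$; and each of the six corner vertices has degree $3+4=7$. I would record the number of vertices of each type as a polynomial in $n$, using that $HX(n)$ contains exactly $6(n-1)^{2}$ triangles.

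Next I would assemble the edge-partition table. The edges split into three families: the inner-triangle edges of the octahedra, all of type $(4,4)$; the cross edges joining a new vertex to an original vertex, of types $(4,7)$, $(4,10)$ and $(4,18)$; and the surviving mesh edges joining two original vertices, whose types are the admissible pairs drawn from $\{7,10,18\}$. For each occurring pair $(i,j)$ I would count the number $m_{i,j}$ of such edges as a polynomial in $n$, separating the interior of the network, which supplies the quadratic bulk terms, from the boundary layers, which supply the linear and constant corrections. The forgotten index is then read off from
\begin{equation*}
F(HDN3(n))=\sum_{(i,j)} m_{i,j}\,\bigl(i^{2}+j^{2}\bigr).
\end{equation*}

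Collecting the contributions and simplifying the resulting quadratic in $n$ gives $F(HDN3(n))=6\bigl(3108n^{2}-8132n+5339\bigr)$. The main obstacle is precisely the boundary bookkeeping in the edge-partition step: the dominant $n^{2}$ coefficient is governed entirely by the interior edges of types $(4,4)$, $(4,18)$ and $(18,18)$, so an error there is easy to spot, but a miscount among the boundary classes involving degrees $7$ and $10$ would silently corrupt the linear and constant terms. To guard against this I would cross-check the class totals against the independent count $|E(HDN3(n))|=63n^{2}-123n+60$ and against the handshaking identity $\sum_{v}\kappa(v)=2|E|$, which together make the boundary multiplicities essentially self-correcting.
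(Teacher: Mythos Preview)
Your approach is correct and essentially identical to the paper's: both partition $E(HDN3(n))$ by the unordered degree pair $(\kappa(\acute p),\kappa(\acute q))$, obtain the nine classes with end-degrees in $\{4,7,10,18\}$ and their polynomial multiplicities in $n$, weight each class by $i^{2}+j^{2}$, and sum. Your derivation of the four degree values from the octahedron replacement and your handshaking/edge-count cross-check are more explicit than what the paper provides (it simply tabulates the edge partition), but the method is the same.
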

\begin{figure}
\centering
  \includegraphics[width=5cm]{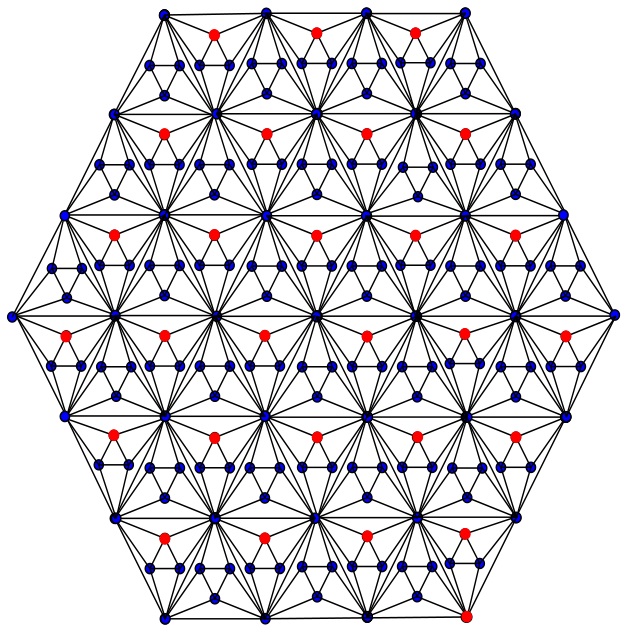}\\
  \caption{Hex Derived network of type 3 ($HDN3(4)$)}\label{fig 2}
\end{figure}
\vspace{0cm}
\begin{table}
\centering
\begin{tabular}{|c|c|c|c|c|}
   \hline
  $(\kappa_{x},\kappa_{y})$ where $\acute{p}\acute{q}\in E(\mathrm{G_1})$ & $\text{Number of edges}$&$(\kappa_{u},\kappa_{v})$ where $\acute{p}\acute{q}\in E(\mathrm{G_1})$&$\text{Number of edges}$ \\\hline
   $(4,4)$ & $18r^2-36r+18$&$(7,18)$ & $6$ \\\hline
   $(4,7)$ & $24$&$(10,10)$ & $6r-18$ \\\hline
   $(4,10)$ & $36r-72$&$(10,18)$ & $12r-24$\\\hline
   $(4,18)$ & $36r^2-108r+84$& $(18,18)$ & $9r^2-33r+30$\\\hline
   $(7,10)$ & $12$&&\\\hline
\end{tabular}
\vspace{.2cm}
\caption{Edge partition of Hex Derived network of type 3 $HDN3(r)$ based on degrees of end vertices of each edge.}
\end{table}
\vspace{-1cm}
\begin{proof}
Let $\mathrm{G_1}$ be the Hex Derived network of type 3, $HDN3(r)$ shown in Fig. \ref{fig 2}, where $r\geq4$. The Hex Derived network $\mathrm{G_1}$ has $21r^2-39r+19$ vertices and the edge set of $\mathrm{G_1}$ is divided into nine partitions based on the degree of end vertices. The first edge partition $E_1(\mathrm{G_1})$ contains $18r^2-36r+18$ edges $\acute{p}\acute{q}$, where $\kappa(\acute{p})=\kappa(\acute{q})=4$. The second edge partition $E_2(\mathrm{G_1})$ contains $24$ edges $\acute{p}\acute{q}$, where $\kappa(\acute{p})=4$ and $\kappa(\acute{q})=7$. The third edge partition $E_3(\mathrm{G_1})$ contains $36r-72$ edges $\acute{p}\acute{q}$, where $\kappa(\acute{p})=4$ and $\kappa(\acute{q})=10$. The fourth edge partition $E_4(\mathrm{G_1})$ contains $36r^2-108r+84$ edges $\acute{p}\acute{q}$, where $\kappa(\acute{p})=4$ and $\kappa(\acute{q})=18$. The fifth edge partition $E_5(\mathrm{G_1})$ contains $12$ edges $\acute{p}\acute{q}$, where $\kappa(\acute{p})=7$ and $\kappa(\acute{q})=10$. The sixth edge partition $E_6(\mathrm{G_1})$ contains $6$ edges $\acute{p}\acute{q}$, where $\kappa(\acute{p})=7$ and $\kappa(\acute{q})=18$. The seventh edge partition $E_7(\mathrm{G_1})$ contains $6r-18$ edges $\acute{p}\acute{q}$, where $\kappa(\acute{p})=\kappa(\acute{q})=10$ and the eighth edge partition $E_8(\mathrm{G_1})$ contains $12r-24$ edges $\acute{p}\acute{q}$, where $\kappa(\acute{p})=10$ and $\kappa(\acute{q})=18$ and the ninth edge partition $E_9(\mathrm{G_1})$ contains $9r^2-33r+30$ edges $\acute{p}\acute{q}$, where $\kappa(\acute{p})=\kappa(\acute{q})=18$, Table 1 shows such an edge partition of $\mathrm{G_1}$. Thus from $(3)$ this follows that
\begin{equation*}
F(G)=\sum_{\acute{p}\acute{q} \in \mathrm{E}(\mathrm{G})}((\kappa(\acute{p}))^2+(\kappa(\acute{q}))^2)
\end{equation*}
Let $\mathrm{G_1}$ be the Hex Derived network of type 3, $HDN3(r)$. By using edge partition from Table 1, the result follows. The forgotten index can be calculated by using (2) as follows.
\begin{equation*}
  F(\mathrm{G_1})=\sum_{\acute{p}\acute{q} \in \mathrm{E}(\mathrm{G})}((\kappa(\acute{p}))^2+(\kappa(\acute{q}))^2)=\sum_{\acute{p}\acute{q} \in \mathrm{E_{j}}(\mathrm{G})}\sum_{j=1}^9((\kappa(\acute{p}))^2+(\kappa(\acute{q}))^2)
\end{equation*}
\begin{eqnarray*}
F(\mathrm{G_1})&=&32|E_1(\mathrm{G_1})|+65|E_2(\mathrm{G_1})|+116|E_3(\mathrm{G_1})|+340|E_4(\mathrm{G_1})|+149|E_5(\mathrm{G_1})|+
\\&&373|E_6(\mathrm{G_1})|+200|E_7(\mathrm{G_1})|+424|E_8(\mathrm{G_1})|+648|E_9(\mathrm{G_1})|
\end{eqnarray*}
\end{proof}
By doing some calculations, we get\\
$$\Longrightarrow F(\mathrm{G_1})=6(5339-8132n+3108n^2).$$
\qed

In the following theorem, we compute Balaban index of Hex Derived network of type 3, $\mathrm{G_1}$.
\begin{theorem}
For Hex Derived network $\mathrm{G_1}$, the Balaban index is equal to
\begin{eqnarray*}
J(\mathrm{G_1})&=&\frac{(20-41n+21n^2)(1595.47+7(-307-270\sqrt{2}+12\sqrt{5}+54\sqrt{10})n)}{70(43-84n+42n^2)}+\\&&
\frac{210(5+3\sqrt{2})n^2}{70(43-84n+42n^2)}
\end{eqnarray*}
\end{theorem}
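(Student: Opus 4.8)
The plan is to apply the Balaban index formula $(3)$ directly, reusing the combinatorial data already established for the forgotten index. Since $J(\mathrm{G_1}) = \left(\frac{m}{m-n+2}\right)\sum_{\acute{p}\acute{q}\in E(\mathrm{G_1})}\frac{1}{\sqrt{\kappa(\acute{p})\kappa(\acute{q})}}$, the rational prefactor needs only the order and size of $\mathrm{G_1}$. We already have $n = |V(\mathrm{G_1})| = 21r^2 - 39r + 19$, and summing the nine edge-class cardinalities from Table 1 gives $m = |E(\mathrm{G_1})| = 63r^2 - 123r + 60 = 3(21r^2 - 41r + 20)$, so that $m = 3(20 - 41n + 21n^2)$. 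Then $m - n + 2 = 42r^2 - 84r + 43$, which is precisely the quadratic $42n^2 - 84n + 43$ appearing in the stated denominator; introducing the factor $70$ to clear the surd denominators of the summand explains the $70(43 - 84n + 42n^2)$ there. Thus the prefactor is pinned down immediately as $\frac{3(20-41n+21n^2)}{42n^2-84n+43}$.

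The next step is to evaluate the weighted sum $S = \sum (\kappa(\acute{p})\kappa(\acute{q}))^{-1/2}$ by running over all nine partitions of Table 1, substituting each end-degree pair into $(\kappa_x\kappa_y)^{-1/2}$ and multiplying by the cardinality. I would organize the result into three groups. The classes $E_1$, $E_7$, $E_9$ (pairs $(4,4)$, $(10,10)$, $(18,18)$) yield purely rational quadratics in $r$. The three $r$-dependent irrational classes are the decisive ones: $E_4$ with $(4,18)$ gives $72^{-1/2} = \tfrac{\sqrt{2}}{12}$, the class $E_3$ with $(4,10)$ gives $40^{-1/2} = \tfrac{\sqrt{10}}{20}$, and $E_8$ with $(10,18)$ gives $180^{-1/2} = \tfrac{\sqrt{5}}{30}$, producing exactly the $\sqrt{2}$, $\sqrt{10}$, $\sqrt{5}$ terms of the answer. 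Finally, the fixed-cardinality classes $E_2$, $E_5$, $E_6$ (pairs $(4,7)$, $(7,10)$, $(7,18)$) contribute constant multiples of $\tfrac{1}{\sqrt{7}}$, $\tfrac{1}{\sqrt{70}}$, $\tfrac{1}{\sqrt{14}}$, which I would evaluate numerically and fold into the single decimal constant.

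Collecting coefficients of $r^2$, $r^1$, $r^0$ and then scaling by $70$ makes the match transparent: the quantity $210\,S$ has leading coefficient $210(5 + 3\sqrt{2})$, linear coefficient $7(-307 - 270\sqrt{2} + 12\sqrt{5} + 54\sqrt{10})$, and a constant term whose rational part $\tfrac{131}{30}$ together with $7\sqrt{2} - \tfrac{18}{5}\sqrt{10} - \tfrac{4}{5}\sqrt{5}$ and the numerical $\sqrt{7},\sqrt{70},\sqrt{14}$ pieces evaluates to $\approx 1595.47$. Forming $J(\mathrm{G_1}) = \frac{(20-41n+21n^2)\,(210\,S)}{70(42n^2 - 84n + 43)}$ then reproduces the claimed expression, with the $210(5+3\sqrt{2})n^2$ piece displayed separately. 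I expect the only real obstacle to be bookkeeping: correctly clearing the individual surd denominators ($12$, $20$, $30$) against the factor $70$, tracking the mixed rational/irrational coefficients across all nine classes, and verifying that the constant contributions of $E_2$, $E_5$, $E_6$ collapse with the rational pieces into the quoted numerical constant. No conceptual difficulty arises beyond this careful summation.
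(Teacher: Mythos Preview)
Your proposal is correct and follows essentially the same approach as the paper: both compute the prefactor $m/(m-n+2)$ from the vertex and edge counts, split the summand $\sum (\kappa(\acute p)\kappa(\acute q))^{-1/2}$ over the nine edge classes of Table~1 with weights $\tfrac14,\tfrac{1}{2\sqrt7},\tfrac{1}{2\sqrt{10}},\tfrac{1}{6\sqrt2},\tfrac{1}{\sqrt{70}},\tfrac{1}{3\sqrt{14}},\tfrac{1}{10},\tfrac{1}{6\sqrt5},\tfrac{1}{18}$, and then simplify. Your additional structural commentary (grouping by $r^2$, $r$, constant and scaling by $210$) is more explicit than the paper's ``by doing some calculation,'' but the method is identical.
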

\begin{proof}
Let $\mathrm{G_1}$ be the Hex Derived network $HDN3(r)$. By using edge partition from Table 1, the result follows. The Balaban index can be calculated by using (3) as follows.
\begin{eqnarray*}
 J(\mathrm{G_1})&=&\bigg(\frac{m}{m-n+2}\bigg)\sum_{\acute{p}\acute{q} \in \mathrm{E}(\mathrm{G})}\frac{1}{\sqrt{\kappa(\acute{p})\times \kappa(\acute{q})}}=\bigg(\frac{m}{m-n+2}\bigg)\sum_{\acute{p}\acute{q} \in \mathrm{E_j}(\mathrm{G})}\sum_{j=1}^9\\&&\frac{1}{\sqrt{\kappa(\acute{p})\times \kappa(\acute{q})}}
\end{eqnarray*}
\begin{eqnarray*}
J(\mathrm{G_1})&=&\bigg(\frac{63n^2-123n+60}{43-84n+42n^2}\bigg)\frac{1}{4}|E_1(\mathrm{G_1})|+\frac{1}{2\sqrt{7}}|E_2(\mathrm{G_1})|+
\frac{1}{2\sqrt{10}}|E_3(\mathrm{G_1})|+\\&&\frac{1}{6\sqrt{2}}|E_4(\mathrm{G_1})|+\frac{1}{\sqrt{70}}|E_5(\mathrm{G_1})|+
\frac{1}{3\sqrt{14}}|E_6(\mathrm{G_1})|+\frac{1}{10}|E_7(\mathrm{G_1})|+\\&&\frac{1}{6\sqrt{5}}|E_8(\mathrm{G_1})|+\frac{1}{18}|E_9(\mathrm{G_1})|
\end{eqnarray*}
By doing some calculation, we get
\begin{eqnarray*}
\Longrightarrow J(\mathrm{G_1})&=&\frac{(20-41n+21n^2)(1595.47+7(-307-270\sqrt{2}+12\sqrt{5}+54\sqrt{10})n)}{70(43-84n+42n^2)}+\\&&
\frac{210(5+3\sqrt{2})n^2}{70(43-84n+42n^2)}.
\end{eqnarray*}
\qed
\end{proof}
Now, we compute $ReZG1$, $ReZG2$ and $ReZG3$ indices of Hex Derived network $\mathrm{G_1}$.
\begin{theorem}
Let $\mathrm{G_1}$ be the Hex Derived network, then\\ \\
$\bullet$ $ReZG1(\mathrm{G_1})$ = $19-39n+21n^2$; \\ \\
$\bullet$ $ReZG2(\mathrm{G_1})$ = $\frac{115452}{425}-\frac{5637n}{11}+\frac{2583n^2}{11}$; \\ \\
$\bullet$ $ReZG3(\mathrm{G_1})$ = $12(27381-38996n+13692n^2)$.
\end{theorem}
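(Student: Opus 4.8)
The plan is to reuse, verbatim, the edge partition of $\mathrm{G_1}=HDN3(r)$ established in the proof of the forgotten-index theorem and recorded in Table~1: the edge set splits into nine classes indexed by the end-degree pairs $(4,4),(4,7),(4,10),(4,18),(7,10),(7,18),(10,10),(10,18),(18,18)$, with cardinalities $18r^2-36r+18,\;24,\;36r-72,\;36r^2-108r+84,\;12,\;6,\;6r-18,\;12r-24,\;9r^2-33r+30$ respectively (here $n=r$). Each of $ReZG_1,ReZG_2,ReZG_3$ defined in (4)--(6) is a sum over edges of a quantity that depends only on the two incident degrees, so every index equals a weighted sum of these nine cardinalities, the weight of a class $(a,b)$ being the summand of (4), (5) or (6) evaluated at $\kappa(\acute{p})=a,\ \kappa(\acute{q})=b$. (In (4) and (5) the repeated subscript is a misprint for $\kappa(\acute{p})+\kappa(\acute{q})$ and $\kappa(\acute{p})\times\kappa(\acute{q})$; I use the intended forms.)

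First I would, for each index separately, list the nine per-edge weights. For $ReZG_1$ these are the numbers $\frac{\kappa(\acute{p})+\kappa(\acute{q})}{\kappa(\acute{p})\kappa(\acute{q})}$, e.g.\ $\frac12,\frac{11}{36},\frac19$ at the three quadratic classes $(4,4),(4,18),(18,18)$; for $ReZG_2$ they are $\frac{\kappa(\acute{p})\kappa(\acute{q})}{\kappa(\acute{p})+\kappa(\acute{q})}$, e.g.\ $2,\frac{36}{11},9$ at the same classes; and for $ReZG_3$ the integers $\kappa(\acute{p})\kappa(\acute{q})\bigl(\kappa(\acute{p})+\kappa(\acute{q})\bigr)$, e.g.\ $128,1584,11664$. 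Multiplying each weight by the matching edge-count polynomial and summing the nine contributions produces, in each case, a quadratic in $n$; reading off the coefficients of $n^2,n^1,n^0$ gives the three stated closed forms.

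I expect the only real effort to be rational bookkeeping rather than any structural obstacle. The $ReZG_3$ computation is purely integral and so is immediate. For $ReZG_1$ the point worth checking is that, although the weights carry denominators $7,11,17,25$, the stated result $19-39n+21n^2$ has integer coefficients; the fractional contributions of the classes $(4,7),(4,10),(7,10),(7,18),(10,18)$ must therefore cancel exactly, which a single common-denominator computation confirms. For $ReZG_2$ no such cancellation occurs in the constant term: the classes $(7,10)$ and $(7,18)$ contribute weights with denominators $17$ and $25$, and since no other class removes these primes their product $425$ survives, yielding the irreducible $\frac{115452}{425}$, while the $n$- and $n^2$-coefficients reduce over $11$ to $-\frac{5637}{11}$ and $\frac{2583}{11}$. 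Thus the main obstacle is simply to carry all nine fractions through the three sums without arithmetic slips and to track which denominators survive; beyond Table~1 no further idea is required.
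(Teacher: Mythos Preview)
Your plan is exactly the paper's: feed the nine edge-classes of Table~1 into the defining sums and collect coefficients. One caveat worth noting: the weight formulas you actually write for $ReZG_1$ and $ReZG_2$ are interchanged relative to the paper's displayed (4) and (5)---but this interchange is precisely what is required to reach the stated closed forms (the paper's own proof shows the same reversal between the weights it lists and the answers it reports, and the standard Ranjini--Lokesha--Usha definitions agree with your assignment); correspondingly, your aside that the $ReZG_1$ weights carry denominators $7,11,17,25$ really describes the product-over-sum side, i.e.\ your $ReZG_2$.
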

\begin{proof}
By using edge partition given in Table 1, the ReZG1$(\mathrm{G_1})$ can be calculated by using $(4)$ as follows.
\begin{eqnarray*}
ReZG1(\mathrm{G})&=&\sum_{\acute{p}\acute{q} \in \mathrm{E}(\mathrm{G})}\bigg(\frac{\kappa(\acute{p})\times \kappa(\acute{q})}{\kappa(\acute{p})+ \kappa(\acute{p})}\bigg)=\sum_{j=1}^9\sum_{\acute{p}\acute{q} \in E_j(\mathrm{G_1})}\bigg(\frac{\kappa(\acute{p})\times \kappa(\acute{q})}{\kappa(\acute{p})+ \kappa(\acute{p})}\bigg)
\end{eqnarray*}

\begin{eqnarray*}
ReZG1(\mathrm{G_1})&=&2|E_1(\mathrm{G_1})|+\frac{28}{11}|E_2(\mathrm{G_1})|+\frac{20}{7}|E_3(\mathrm{G_1})|+
\frac{36}{11}|E_4(\mathrm{G_1})|+\\&&\frac{70}{17}|E_5(\mathrm{G_1})|+\frac{126}{25}|E_6(\mathrm{G_1})|+5|E_7(\mathrm{G_1})|
+\frac{45}{7}|E_8(\mathrm{G_1})|+\\&&9|E_9(\mathrm{G_1})|
\end{eqnarray*}
By doing some calculation, we get\\
$$\Longrightarrow ReZG1(\mathrm{G_1})=19-39n+21n^2$$
The ReZG2$(\mathrm{G_1})$ can be calculated by using $(5)$ as follows.
$$ReZG2(\mathrm{G})=\sum_{\acute{p}\acute{q} \in \mathrm{E}(\mathrm{G})}\bigg(\frac{\kappa(\acute{p})+\kappa(\acute{q})}{\kappa(\acute{p})\times \kappa(\acute{p})}\bigg)=\sum_{j=1}^9\sum_{\acute{p}\acute{q} \in E_j(\mathrm{G_1})}\bigg(\frac{\kappa(\acute{p})+\kappa(\acute{q})}{\kappa(\acute{p})\times \kappa(\acute{p})}\bigg)$$
\begin{eqnarray*}
ReZG2(\mathrm{G_1})&=&\frac{1}{2}|E_1(\mathrm{G_1})|+\frac{11}{28}|E_2(\mathrm{G_1})|+\frac{7}{20}|E_3(\mathrm{G_1})|+
\frac{11}{36}|E_4(\mathrm{G_1})|+\\&&\frac{17}{70}|E_5(\mathrm{G_1})|+\frac{25}{126}|E_6(\mathrm{G_1})|+
\frac{1}{5}|E_7(\mathrm{G_1})|
+\frac{7}{45}|E_8(\mathrm{G_1})|+\\&&\frac{1}{9}|E_9(\mathrm{G_1})|
\end{eqnarray*}
By doing some calculation, we get\\
$$\Longrightarrow ReZG2(\mathrm{G_1})=\frac{115452}{425}-\frac{5637n}{11}+\frac{2583n^2}{11}$$
The ReZG3$(\mathrm{G_1})$ index can be calculated from (6) as follows.
\begin{eqnarray*}
ReZG3(\mathrm{G})&=&\sum_{\acute{p}\acute{q} \in \mathrm{E}(\mathrm{G})}(\kappa(\acute{p})\times\kappa(\acute{q}))(\kappa(\acute{p})+ \kappa(\acute{p})\\&&=\sum_{j=1}^{9}\sum_{\acute{p}\acute{q} \in E_j(\mathrm{G_1})}(\kappa(\acute{p})\times\kappa(\acute{q}))(\kappa(\acute{p})+ \kappa(\acute{p}))
\end{eqnarray*}
\begin{eqnarray*}
ReZG3(\mathrm{G_1})&=&128|E_1(\mathrm{G_1})|+308|E_2(\mathrm{G_1})|+560|E_3(\mathrm{G_1})|+1584|E_4(\mathrm{G_1})|+\\&&1190|E_5(\mathrm{G_1})|+
3150|E_6(\mathrm{G_1})|+2000|E_7(\mathrm{G_1})|
+5040|E_8(\mathrm{G_1})|+\\&&11664|E_9(\mathrm{G_1})|
\end{eqnarray*}
By doing some calculation, we get\\
\begin{eqnarray*}
\Longrightarrow ReZG3(\mathrm{G_1})&=&12(27381-38996n+13692n^2)
\end{eqnarray*}
\end{proof}
\qed
\begin{table}[h]
\centering
\begin{tabular}{|c|c|c|c|c|}
   \hline
  $(\kappa_{x},\kappa_{y})$ where $\acute{p}\acute{q}\in E(\mathrm{G_1})$ & $\text{Number of edges}$&$(\kappa_{u},\kappa_{v})$ where $\acute{p}\acute{q}\in E(\mathrm{G_1})$&$\text{Number of edges}$ \\\hline
   $(25,33)$ & $12$&$(44,44)$ & $18r^2-72r+72$ \\\hline
   $(25,36)$ & $12$&$(44,129)$ & $36$ \\\hline
   $(25,54)$ & $12$&$(44,140)$ & $48r-144$\\\hline
   $(25,77)$ & $12$& $(44,156)$ & $36r^2-180r+228$\\\hline
   $(28,36)$ & $12r-36$& $(54,77)$ & $12$\\\hline
    $(28,77)$ & $12$&$(54,129)$ & $6$ \\\hline
   $(28,80)$ & $12r-48$&$(77,80)$ & $12$ \\\hline
   $(33,36)$ & $12$&$(77,129)$ & $12$\\\hline
   $(33,54)$ & $12$& $(77,140)$ & $12$\\\hline
   $(33,129)$ & $12$& $(80,80)$ & $6r-30$\\\hline
   $(36,36)$ & $12r-30$&$(80,140)$ & $12r-48$ \\\hline
   $(36,44)$ & $12r-24$&$(129,140)$ & $12$\\\hline
   $(36,77)$ & $48$& $(129,156)$ & $6$\\\hline
   $(36,80)$ & $24r-96$& $(140,140)$ & $6r-24$\\\hline
   $(36,129)$ & $24$& $(140,156)$ & $12r-36$\\\hline
   $(36,140)$ & $24r-72$& $(156,156)$ & $9r^2-51r+72$\\\hline
\end{tabular}
\vspace{.2cm}
\caption{Edge partition of Hex Derived network of type 3 $HDN3(r)$ based on degrees of end vertices of each edge.}
\end{table}
Now, we compute $ABC_{4}$ and $GA_{5}$ indices of Hex Derived network $\mathrm{G_1}$.
\begin{theorem}
Let $\mathrm{G_1}$ be the Hex Derived network, then\\ \\
$\bullet$ $ABC_{4}(\mathrm{G_1})$ = $51.706+\frac{3}{20}\sqrt{\frac{79}{2}}(-5+n)+3\sqrt{\frac{53}{70}}(-4+n)+\frac{3}{5} \sqrt{\frac{109}{14}}(-4+n)+\sqrt{\frac{114}{5}}(-4+n)+\frac{3}{35} \sqrt{\frac{139}{2}}(-4+n)+3\sqrt{\frac{14}{65}}(-3+n)+12\sqrt{\frac{26}{55}}(-3+n)+2\sqrt{\frac{174}{35}}(-3+n)+\sqrt{\frac{62}{7}}
(-3+n)+\sqrt{\frac{78}{11}}(-2+n)+\frac{9}{11}\sqrt{\frac{43}{2}}(-2+n)^2+\frac{1}{3}\sqrt{\frac{35}{2}}(-5+2n)+\frac{1}{26} \sqrt{\frac{155}{2}}(24-17n+3n^2)+3\sqrt{\frac{6}{13}}(19-15n+3n^2)$; \\ \\
$\bullet$ $ABC_4(\mathrm{G_1})$ = $110.66+ \frac{6}{37}\sqrt{1365}(-6+n)+\frac{24}{11}\sqrt{7}(-5+n)+\frac{18}{11}\sqrt{35}(-5+n)+\frac{24}{23} \sqrt{385}(-5+n)+\frac{144}{29}\sqrt{5}(-4+n)+\frac{9}{5}\sqrt{11}(-4+n)+\frac{8}{9}\sqrt{35}(-4+n)+\frac{36}{29} \sqrt{22}(-2+n)-12n+3n^2+\frac{3}{2}(42-13n+n^2)+\frac{6}{25}\sqrt{429}(30-11n+n^2)$.
\end{theorem}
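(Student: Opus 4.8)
The plan is to reduce both indices to a single combinatorial input: the refinement of $E(\mathrm{G_1})$ according to the pair $(S_{\acute{p}},S_{\acute{q}})$ of neighbour-degree sums recorded in Table 2. Since every summand of $ABC_4$ and $GA_5$ depends only on $S_{\acute{p}}$ and $S_{\acute{q}}$, once this table is established both indices follow by direct substitution into $(7)$ and $(8)$. Writing $|E_{(a,b)}|$ for the number of edges with $\{S_{\acute{p}},S_{\acute{q}}\}=\{a,b\}$, I would evaluate
\[
ABC_4(\mathrm{G_1})=\sum_{(a,b)}|E_{(a,b)}|\,\sqrt{\frac{a+b-2}{ab}},\qquad GA_5(\mathrm{G_1})=\sum_{(a,b)}|E_{(a,b)}|\,\frac{2\sqrt{ab}}{a+b},
\]
the sums ranging over the $32$ rows of Table 2 (the second bullet, though printed as $ABC_4$, is the $GA_5$ index announced just before the theorem).

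The substantive step, and the main obstacle, is producing Table 2 itself. Starting from the ordinary degree partition of Table 1, whose degrees take only the values $4,7,10,18$, I would compute $S_{\acute{p}}=\sum_{\acute{q}\in N_{\mathrm{G_1}}(\acute{p})}\kappa(\acute{q})$ for each vertex by reading the multiset of neighbour degrees off the planar-octahedral local structure of $HDN3(r)$. The interior vertices are locally regular and furnish the three ``bulk'' classes $(44,44)$, $(44,156)$ and $(156,156)$, whose counts $18r^2-72r+72$, $36r^2-180r+228$ and $9r^2-51r+72$ carry all the quadratic growth, while the remaining $29$ classes are boundary and corner effects with only linear or constant counts. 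This neighbourhood bookkeeping is delicate exactly along the boundary, where a single vertex can see several distinct degrees at once, so the decisive consistency check is that the row counts of Table 2 sum to $63r^2-123r+60=|E(\mathrm{G_1})|$ and collapse to the counts of Table 1 once the $S$-labels are forgotten; this simultaneously guards against the commonest errors.

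With Table 2 in place the two indices follow mechanically, the only work being the repeated simplification of surds and the amalgamation of like powers of $n$. For $ABC_4$ the three bulk rows give the per-edge values $(44,44)\mapsto\sqrt{86}/44=\frac{1}{22}\sqrt{43/2}$, $(44,156)\mapsto\sqrt{3/104}$ and $(156,156)\mapsto\sqrt{310}/156=\frac{1}{78}\sqrt{155/2}$; multiplying by their counts and using $12\sqrt{3/104}=3\sqrt{6/13}$ reproduces the terms $\frac{9}{11}\sqrt{43/2}(-2+n)^2$, $3\sqrt{6/13}(19-15n+3n^2)$ and $\frac{1}{26}\sqrt{155/2}(24-17n+3n^2)$, with the constant-count rows absorbed into the numerical constant $51.706$. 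The $GA_5$ computation is identical in spirit: evaluating $2\sqrt{ab}/(a+b)$ on each row, several bulk edges such as $(44,44)$ and $(156,156)$ have ratio $1$, while $(44,156)$ gives $2\sqrt{6864}/200=\sqrt{429}/25$ via $\sqrt{6864}=4\sqrt{429}$, and collecting the $32$ contributions yields the second displayed formula. I anticipate no conceptual difficulty past this arithmetic; the real risk lies entirely in the neighbourhood-degree tabulation of the preceding paragraph.
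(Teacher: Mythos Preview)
Your approach is exactly that of the paper: take the $32$-row $(S_{\acute p},S_{\acute q})$ edge partition of Table~2 as given, substitute each row into $(7)$ for $ABC_4$ and into $(8)$ for $GA_5$ (indeed the second bullet is $GA_5$, as the paper's own proof confirms), and then simplify the resulting linear combination of surds. The paper does not derive Table~2 either---it is simply presented---so your added consistency check against $|E(\mathrm{G_1})|=63r^2-123r+60$ and Table~1 is a worthwhile safeguard that goes slightly beyond what the paper records.
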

\begin{proof}
By using edge partition given in Table 2, the $ABC_{4}(\mathrm{G_1})$ can be calculated by using $(7)$ as follows.
$$ABC_{4}(\mathrm{G})=\sum_{\acute{p}\acute{q} \in \mathrm{E}(\mathrm{G})}\sqrt{\frac{S_{\acute{p}}+S_{\acute{q}}-2}{S_{\acute{p}}S_{\acute{q}}}}=\sum_{j=10}^{41}\sum_{\acute{p}\acute{q} \in E_j(\mathrm{G_1})}\sqrt{\frac{S_{\acute{p}}+S_{\acute{q}}-2}{S_{\acute{p}}S_{\acute{q}}}}$$
\begin{eqnarray*}
ABC_4(\mathrm{G_1})&=&\frac{2}{5}\sqrt{\frac{14}{33}}|E_{10}(\mathrm{G_1})|+\frac{\sqrt{59}}{30}|E_{11}(\mathrm{G_1})|+\frac{1}{15}\sqrt{\frac{77}{6}}|E_{12}(\mathrm{G_1})|+
\\&&\frac{36}{11}\frac{2}{\sqrt{77}}|E_{13}(\mathrm{G_1})|+\frac{1}{6}\sqrt{\frac{31}{14}}|E_{14}(\mathrm{G_1})|+\frac{1}{14}\sqrt{\frac{103}{11}}|E_{15}(\mathrm{G_1})|+
\\&&\frac{1}{4}\sqrt{\frac{53}{70}}|E_{16}(\mathrm{G_1})|+\frac{1}{6}\sqrt{\frac{67}{33}}|E_{17}(\mathrm{G_1})|+\frac{1}{9}\sqrt{\frac{85}{22}}|E_{18}(\mathrm{G_1})|
+\\&&\frac{4}{3}\sqrt{\frac{10}{473}}|E_{19}(\mathrm{G_1})|+\frac{1}{18}\sqrt{\frac{32}{2}}|E_{20}(\mathrm{G_1})|+\frac{1}{2}\sqrt{\frac{13}{66}}|E_{21}(\mathrm{G_1})|
+\\&&\frac{1}{2}\sqrt{\frac{37}{231}}|E_{22}(\mathrm{G_1})|+\frac{1}{4}\sqrt{\frac{19}{30}}|E_{23}(\mathrm{G_1})|+\frac{1}{6}\sqrt{\frac{163}{129}}|E_{24}(\mathrm{G_1})|+
\\&&\frac{1}{2}\sqrt{\frac{29}{210}}|E_{25}(\mathrm{G_1})|+\frac{1}{22}\sqrt{\frac{43}{2}}|E_{26}(\mathrm{G_1})|+\frac{1}{2}\sqrt{\frac{57}{473}}|E_{27}(\mathrm{G_1})|
+\\&&\frac{1}{2}\sqrt{\frac{13}{110}}|E_{28}(\mathrm{G_1})|
+\frac{1}{2}\sqrt{\frac{3}{26}}|E_{29}(\mathrm{G_1})|
+\frac{1}{3}\sqrt{\frac{43}{154}}|E_{30}(\mathrm{G_1})|
+\\&&\frac{1}{9}\sqrt{\frac{181}{86}}|E_{31}(\mathrm{G_1})|
+\frac{1}{4}\sqrt{\frac{31}{77}}|E_{32}(\mathrm{G_1})|
+2\sqrt{\frac{17}{3311}}|E_{33}(\mathrm{G_1})|
+\\&&\frac{1}{14}\sqrt{\frac{43}{11}}|E_{34}(\mathrm{G_1})|
+\frac{1}{40}\sqrt{\frac{79}{2}}|E_{35}(\mathrm{G_1})
+\frac{1}{20}\sqrt{\frac{109}{14}}|E_{36}(\mathrm{G_1})|
+\\&&\frac{1}{2}\sqrt{\frac{99}{1505}}|E_{37}(\mathrm{G_1})
+\frac{1}{6}\sqrt{\frac{283}{559}}|E_{38}(\mathrm{G_1})
+\frac{1}{70}\sqrt{\frac{139}{2}}|E_39(\mathrm{G_1})
+\\&&\frac{1}{2}\sqrt{\frac{7}{130}}|E_{40}(\mathrm{G_1})
+\frac{1}{78}\sqrt{\frac{155}{2}}|E_{41}(\mathrm{G_1})
\end{eqnarray*}
By doing some calculation, we get\\
\begin{eqnarray*}
\Longrightarrow ABC_4(\mathrm{G_1})&=&51.706+\frac{3}{20}\sqrt{\frac{79}{2}}(-5+n)+3\sqrt{\frac{53}{70}}(-4+n)+\frac{3}{5} \sqrt{\frac{109}{14}}(-4+n)+\\&&\sqrt{\frac{114}{5}}(-4+n)+\frac{3}{35} \sqrt{\frac{139}{2}}(-4+n)+3\sqrt{\frac{14}{65}}(-3+n)+\\&&12\sqrt{\frac{26}{55}}(-3+n)+2\sqrt{\frac{174}{35}}(-3+n)+\sqrt{\frac{62}{7}}
(-3+n)+\\&&\sqrt{\frac{78}{11}}(-2+n)+\frac{9}{11}\sqrt{\frac{43}{2}}(-2+n)^2+\frac{1}{3}\sqrt{\frac{35}{2}}(-5+2n)+\frac{1}{26} \sqrt{\frac{155}{2}}\\&&(24-17n+3n^2)+3\sqrt{\frac{6}{13}}(19-15n+3n^2)
\end{eqnarray*}
The $GA_{5}(\mathrm{G_1})$ index can be calculated from (8) as follows.
\begin{eqnarray*}
GA_{5}(\mathrm{G})=\sum_{\acute{p}\acute{q} \in \mathrm{E}(\mathrm{G})}\frac{2\sqrt{S_{\acute{p}}S_{\acute{q}}}}{(S_{\acute{p}}+S_{\acute{q}})}=\sum_{j=1}^{41}\sum_{\acute{p}\acute{q} \in E_j(\mathrm{G_1})}\frac{2\sqrt{S_{\acute{p}}S_{\acute{q}}}}{(S_{\acute{p}}+S_{\acute{q}})}
\end{eqnarray*}
\begin{eqnarray*}
GA_5(\mathrm{G_1})&=&\frac{5}{29}\sqrt{33}|E_{10}(\mathrm{G_1})|+\frac{60}{11}|E_{11}(\mathrm{G_1})|+
\frac{30}{79}\sqrt{6}|E_{12}(\mathrm{G_1})|+
\\&&\frac{5}{51}\sqrt{77}|E_{13}(\mathrm{G_1})|+\frac{3}{8}\sqrt{7}|E_{14}(\mathrm{G_1})|+
\frac{4}{15}\sqrt{11}|E_{15}(\mathrm{G_1})|+
\\&&\frac{4}{27}\sqrt{35}|E_{16}(\mathrm{G_1})|+\frac{4}{23}\sqrt{33}|E_{17}(\mathrm{G_1})|+
\frac{6}{29}\sqrt{22}|E_{18}(\mathrm{G_1})|
+\\&&\frac{1}{31}\sqrt{957}|E_{19}(\mathrm{G_1})|+|E_{20}(\mathrm{G_1})|+
\frac{3}{10}\sqrt{11}|E_{21}(\mathrm{G_1})|
+\\&&\frac{12}{113}\sqrt{77}|E_{22}(\mathrm{G_1})|+\frac{12}{29}\sqrt{5}|E_{23}(\mathrm{G_1})|+
\frac{4}{55}\sqrt{129}|E_{24}(\mathrm{G_1})|+
\\&&\frac{3}{22}\sqrt{35}|E_{25}(\mathrm{G_1})|+|E_{26}(\mathrm{G_1})|+
\frac{4}{173}\sqrt{1419}|E_{27}(\mathrm{G_1})|
+\\&&\frac{1}{23}\sqrt{385}|E_{28}(\mathrm{G_1})|
+\frac{1}{25}\sqrt{429}|E_{29}(\mathrm{G_1})|
+\frac{6}{131}\sqrt{462}|E_{30}(\mathrm{G_1})|
+\\&&\frac{6}{61}\sqrt{86}|E_{31}(\mathrm{G_1})|
+\frac{8}{157}\sqrt{385}|E_{32}(\mathrm{G_1})|
+\frac{1}{103}\sqrt{9933}|E_{33}(\mathrm{G_1})|
+\\&&\frac{4}{31}\sqrt{55}|E_{34}(\mathrm{G_1})|
+|E_{35}(\mathrm{G_1})
+\frac{4}{11}\sqrt{7}|E_{36}(\mathrm{G_1})|
+\\&&\frac{4}{269}\sqrt{4515}|E_{37}(\mathrm{G_1})|
+\frac{4}{95}\sqrt{559}|E_{38}(\mathrm{G_1})|
+|E_39(\mathrm{G_1})|
+\\&&\frac{1}{37}\sqrt{1365}|E_{40}(\mathrm{G_1})|
+|E_{41}(\mathrm{G_1})|
\end{eqnarray*}
By doing some calculation, we get\\
\begin{eqnarray*}
\Longrightarrow GA_5(\mathrm{G_1})&=& 315.338+\frac{288}{29}\sqrt{5}(-4+r)+\frac{48}{11}\sqrt{7}(-4+r)+\frac{16}{9}\sqrt{35}(-4+r)+\frac{9}{2}\\&& \sqrt{7}(-3+r)+\frac{36}{11}\sqrt{35}(-3+r)+\frac{48}{23}\sqrt{385}(-3+r)+\frac{12}{37}\sqrt{1365}(-3+r)+\\&&\frac{18}{5}\sqrt{11}(-2+r)-99r+27r^2+\frac{12}{25} \sqrt{429}(19-15r+3r^2)
\end{eqnarray*}
\end{proof}
 \subsection{Results for Third type of Rectangular Hex Derived network $THDN3(r)$}
 Now, we discuss the newly derived third type of rectangular hex derived network and compute numerical and exact results for forgotten index, Balaban index, reclassified the Zagreb indices, forth version of $ABC$ index and fifth version of $GA$ index for $THDN3(r)$.
\begin{theorem}
Consider the Third type of Triangular Hex Derived network of $THDN3(n)$, then its forgotten index is equal to
\begin{equation*}
F(HDN3(n))=12(990-997r+259r^2)
\end{equation*}
\end{theorem}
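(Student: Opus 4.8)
The plan is to mirror exactly the method used for $F(HDN3(n))$ in the first theorem: reduce the forgotten index to a finite degree-based edge partition and then sum a constant-weighted polynomial. First I would fix the network, say $\mathrm{G_2}=THDN3(r)$, and determine its vertex and edge sets, recording $|V(\mathrm{G_2})|$ and $|E(\mathrm{G_2})|$ as polynomials in $r$. Since $THDN3(r)$ is obtained from $HDN3(r)$ by the triangular construction of Step-3 in the drawing algorithm, and since the local cell is still a planar octahedron $POH$ replacing each $K_3$, the interior vertex degrees that arise are drawn from the same small set $\{4,7,10,18\}$ seen in Table 1; the triangular trimming only alters how these cells are truncated along the boundary, possibly introducing a few boundary-specific degrees.

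Next I would construct the edge partition of $\mathrm{G_2}$ indexed by the unordered degree pair $(\kappa(\acute{p}),\kappa(\acute{q}))$ of each edge's endpoints, exactly in the format of Table 1. For each admissible pair I would count the edges as an explicit function of $r$: interior bulk edges contribute the quadratic and leading linear terms, while edges along the triangular frame contribute the lower-order corrections. This enumeration is the crux of the argument and the main obstacle. One must argue, by tracking the octahedral cells and how the triangular boundary cuts them, both that the list of degree pairs is complete and that each count is correct; an off-by-one slip in any boundary class propagates directly into the final polynomial, so the counting must be done carefully and cross-checked.

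Once the partition table is established, the remainder is routine. By equation (2),
$$F(\mathrm{G_2})=\sum_{\acute{p}\acute{q}\in E(\mathrm{G_2})}\big((\kappa(\acute{p}))^2+(\kappa(\acute{q}))^2\big)=\sum_{j}\big((\kappa(\acute{p}))^2+(\kappa(\acute{q}))^2\big)\,\big|E_j(\mathrm{G_2})\big|,$$
so each class contributes its constant weight $\kappa(\acute{p})^2+\kappa(\acute{q})^2$ (for instance $32$ for a $(4,4)$ edge and $648$ for an $(18,18)$ edge) multiplied by the edge count $|E_j|$. Substituting the counts and collecting powers of $r$ yields a single quadratic, which I expect to simplify to $12(990-997r+259r^2)$. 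As consistency checks I would verify that $\sum_j |E_j(\mathrm{G_2})|$ recovers the independently computed $|E(\mathrm{G_2})|$, and that the leading coefficient is governed by the dominant interior $(4,4)$, $(4,18)$ and $(18,18)$ classes, matching the $259r^2$ term.
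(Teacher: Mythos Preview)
Your proposal is essentially the paper's own proof: it, too, fixes $\mathrm{G_2}=THDN3(r)$, writes down the degree-based edge partition (Table~3), and then applies (2) as a weighted sum $32|E_1|+116|E_2|+340|E_3|+200|E_4|+424|E_5|+648|E_6|$ before simplifying. The only refinement worth noting is that in the triangular network the degree set actually shrinks to $\{4,10,18\}$ (no degree-$7$ corners survive), so there are just six edge classes rather than nine.
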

\begin{figure}
\centering
  \includegraphics[width=5cm]{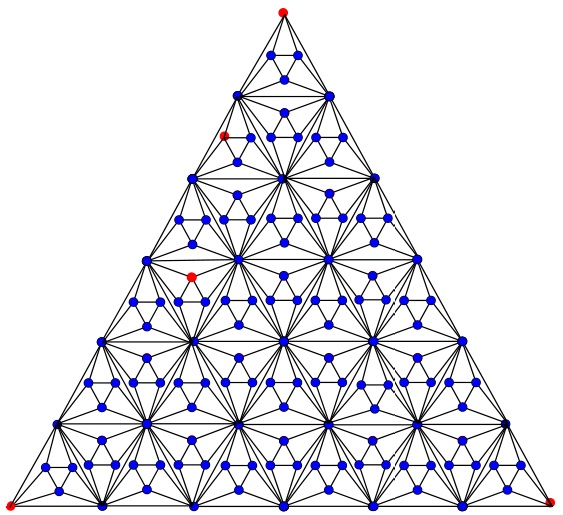}\\
  \caption{Hex Derived network of type 3 ($HDN3(4)$)}\label{fig 2}
\end{figure}
\vspace{1cm}
\begin{table}
\centering
\begin{tabular}{|c|c|c|c|c|}
   \hline
  $(\kappa_{x},\kappa_{y})$ where $\acute{p}\acute{q}\in E(\mathrm{G_1})$ & $\text{Number of edges}$&$(\kappa_{u},\kappa_{v})$ where $\acute{p}\acute{q}\in E(\mathrm{G_1})$&$\text{Number of edges}$ \\\hline
   $(4,4)$ & $3r^2-6r+9$&$(10,10)$ & $3r-6$ \\\hline
   $(4,10)$ & $18r-30$&$(10,18)$ & $6r-18$ \\\hline
   $(4,18)$ & $6r^2-30r+36$&$(18,18)$ & $\frac{3r^2-21r+36}{2}$\\\hline
\end{tabular}
\vspace{.2cm}
\caption{Edge partition Third type of Triangular Hex Derived network $THDN3(r)$ based on degrees of end vertices of each edge.}
\end{table}
\vspace{-1cm}
\begin{proof}
Let $\mathrm{G_2}$ be the Hex Derived network of type 3, $HDN3(r)$ shown in Fig. \ref{fig 2}, where $r\geq4$. The Hex Derived network $\mathrm{G_2}$ has $\frac{7r^2-11r+6}{2}$ vertices and the edge set of $\mathrm{G_2}$ is divided into six partitions based on the degree of end vertices. The first edge partition $E_1(\mathrm{G_2})$ contains $3r^2-6r+9$ edges $\acute{p}\acute{q}$, where $\kappa(\acute{p})=\kappa(\acute{q})=4$. The second edge partition $E_2(\mathrm{G_2})$ contains $18r-30$ edges $\acute{p}\acute{q}$, where $\kappa(\acute{p})=4$ and $\kappa(\acute{q})=10$. The third edge partition $E_3(\mathrm{G_2})$ contains $6r^2-30r+36$ edges $\acute{p}\acute{q}$, where $\kappa(\acute{p})=4$ and $\kappa(\acute{q})=18$. The fourth edge partition $E_4(\mathrm{G_2})$ contains $3r-6$ edges $\acute{p}\acute{q}$, where $\kappa(\acute{p})=\kappa(\acute{q})=10$. The fifth edge partition $E_5(\mathrm{G_2})$ contains $6r-18$ edges $\acute{p}\acute{q}$, where $\kappa(\acute{p})=10$ and $\kappa(\acute{q})=18$ and the sixth edge partition $E_6(\mathrm{G_2})$ contains $\frac{3r^2-21r+36}{2}$ edges $\acute{p}\acute{q}$, where $\kappa(\acute{p})=\kappa(\acute{q})=18$. Table 3, shows such an edge partition of $\mathrm{G_2}$. Thus from $(3)$ this follows that
\begin{equation*}
F(G)=\sum_{\acute{p}\acute{q} \in \mathrm{E}(\mathrm{G})}((\kappa(\acute{p}))^2+(\kappa(\acute{q}))^2)
\end{equation*}
Let $\mathrm{G_2}$ be the third type of triangular hex derived network, $THDN3(r)$. By using edge partition from Table 3, the result follows. The forgotten index can be calculated by using (2) as follows.
\begin{equation*}
  F(\mathrm{G_2})=\sum_{\acute{p}\acute{q} \in \mathrm{E}(\mathrm{G})}((\kappa(\acute{p}))^2+(\kappa(\acute{q}))^2)=\sum_{\acute{p}\acute{q} \in \mathrm{E_{j}}(\mathrm{G})}\sum_{j=1}^6((\kappa(\acute{p}))^2+(\kappa(\acute{q}))^2)
\end{equation*}
\begin{eqnarray*}
F(\mathrm{G_2})&=&32|E_1(\mathrm{G_2})|+116|E_2(\mathrm{G_2})|+340|E_3(\mathrm{G_2})|+200|E_4(\mathrm{G_2})|+424|E_5(\mathrm{G_2})|+
\\&&648|E_6(\mathrm{G_2})|
\end{eqnarray*}
\end{proof}
By doing some calculations, we get\\
$$\Longrightarrow F(\mathrm{G_2})=12(990-997r+259r^2).$$
\qed

In the following theorem, we compute Balaban index of third type of triangular hex Derived network, $\mathrm{G_2}$.
\begin{theorem}
For triangular hex derived network $\mathrm{G_2}$, the Balaban index is equal to
\begin{eqnarray*}
J(\mathrm{G_2})&=&\bigg(\frac{1}{40(8-14r+7r^2)}\bigg)(6-13r+7r^2)(159+1802\sqrt{2}-36\sqrt{5}-\\&&90\sqrt{10}+(-107-150\sqrt{2}+12\sqrt{5}+54\sqrt{10})r+
10(5+3\sqrt{2})r^2)
\end{eqnarray*}
\end{theorem}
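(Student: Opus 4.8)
The plan is to evaluate $J(\mathrm{G_2})$ straight from definition $(3)$, splitting the computation into the rational prefactor $\tfrac{m}{m-n+2}$ and the weighted edge sum $\sum_{\acute{p}\acute{q}}1/\sqrt{\kappa(\acute{p})\kappa(\acute{q})}$, and multiplying the two pieces only at the very end. Because Table 3 exhausts the edges of $\mathrm{G_2}$ into six degree-pair classes, each of the two pieces reduces to a finite combination of the six tabulated edge counts.

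First I would fix the size $m$ by summing the six entries of the edge-count column,
$$m=(3r^2-6r+9)+(18r-30)+(6r^2-30r+36)+(3r-6)+(6r-18)+\tfrac{3r^2-21r+36}{2},$$
which collapses to $m=\tfrac{21r^2-39r+18}{2}=\tfrac{3}{2}(7r^2-13r+6)$. With the stated order $n=\tfrac{7r^2-11r+6}{2}$, subtracting gives $m-n+2=7r^2-14r+8$, which is precisely the denominator factor $(8-14r+7r^2)$ in the claim. Consequently $\tfrac{m}{m-n+2}=\tfrac{3(7r^2-13r+6)}{2(7r^2-14r+8)}$, which already accounts for the outer factor $(6-13r+7r^2)$ and the $7r^2-14r+8$ denominator of the target expression.

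Next I would record the reciprocal square-root weight of each class, namely $\tfrac14$, $\tfrac{1}{2\sqrt{10}}$, $\tfrac{1}{6\sqrt{2}}$, $\tfrac1{10}$, $\tfrac{1}{6\sqrt{5}}$ and $\tfrac1{18}$ for the pairs $(4,4)$, $(4,10)$, $(4,18)$, $(10,10)$, $(10,18)$ and $(18,18)$ respectively. Multiplying by the corresponding edge counts and rationalising, I would sort the total into the four incommensurable families $1$, $\sqrt{2}$, $\sqrt{5}$, $\sqrt{10}$: the rational part collects to $\tfrac{1}{60}(50r^2-107r+159)$, the $\sqrt{2}$ part to $\tfrac{\sqrt{2}}{60}(30r^2-150r+180)$, the $\sqrt{5}$ part to $\tfrac{\sqrt{5}}{60}(12r-36)$ and the $\sqrt{10}$ part to $\tfrac{\sqrt{10}}{60}(54r-90)$, so that the whole edge sum equals $\tfrac{1}{60}$ times the bracketed polynomial in the statement.

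Finally I would multiply the prefactor $\tfrac{3(7r^2-13r+6)}{2(7r^2-14r+8)}$ by the edge sum $\tfrac{1}{60}[\,\cdots\,]$; here $\tfrac32\cdot\tfrac{1}{60}=\tfrac1{40}$ supplies the constant $40$ in the denominator while $(7r^2-13r+6)$ reappears as $(6-13r+7r^2)$, reproducing the asserted closed form. I expect the only genuine difficulty to be bookkeeping: the four surd families must be kept strictly separate throughout the rationalisation, since a slip in any one class silently corrupts a single coefficient of the final polynomial without otherwise disturbing the shape of the answer. To guard against this I would verify each of the four coefficient polynomials independently before forming the product, and I would sanity-check the key cancellation $\tfrac32\cdot\tfrac1{60}=\tfrac1{40}$ separately.
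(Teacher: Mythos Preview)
Your proposal is correct and follows essentially the same route as the paper: compute the prefactor $\tfrac{m}{m-n+2}=\tfrac{3}{2}\cdot\tfrac{6-13r+7r^2}{8-14r+7r^2}$ from the vertex and edge counts, evaluate the weighted edge sum $\sum 1/\sqrt{\kappa(\acute p)\kappa(\acute q)}$ class by class from Table~3 using the weights $\tfrac14,\tfrac{1}{2\sqrt{10}},\tfrac{1}{6\sqrt2},\tfrac{1}{10},\tfrac{1}{6\sqrt5},\tfrac{1}{18}$, and multiply. Your explicit sorting into the surd families $1,\sqrt2,\sqrt5,\sqrt{10}$ and the check $\tfrac32\cdot\tfrac{1}{60}=\tfrac{1}{40}$ simply make visible the ``some calculation'' that the paper suppresses; note incidentally that your $\sqrt2$-constant $180$ reveals that the printed $1802\sqrt2$ in the statement is a typographical slip for $180\sqrt2$.
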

\begin{proof}
Let $\mathrm{G_2}$ be the triangular hex derived network $THDN3(r)$. By using edge partition from Table 3, the result follows. The Balaban index can be calculated by using (3) as follows.
\begin{eqnarray*}
 J(\mathrm{G_2})&=&\bigg(\frac{m}{m-n+2}\bigg)\sum_{\acute{p}\acute{q} \in \mathrm{E}(\mathrm{G})}\frac{1}{\sqrt{\kappa(\acute{p})\times \kappa(\acute{q})}}=\bigg(\frac{m}{m-n+2}\bigg)\sum_{\acute{p}\acute{q} \in \mathrm{E_j}(\mathrm{G})}\sum_{j=1}^6\\&&\frac{1}{\sqrt{\kappa(\acute{p})\times \kappa(\acute{q})}}
\end{eqnarray*}
\begin{eqnarray*}
J(\mathrm{G_2})&=&\frac{3}{2}\bigg(\frac{6-13r+7r^2}{8-14r+7r^2}\bigg)\frac{1}{4}|E_1(\mathrm{G_2})|+
\frac{1}{2\sqrt{10}}|E_2(\mathrm{G_2})|+\frac{1}{6\sqrt{2}}|E_3(\mathrm{G_2})|+\\&&\frac{1}{10}|E_4(\mathrm{G_2})|+
\frac{1}{6\sqrt{5}}|E_5(\mathrm{G_2})|+\frac{1}{18}|E_6(\mathrm{G_2})|
\end{eqnarray*}
By doing some calculation, we get
\begin{eqnarray*}
\Longrightarrow J(\mathrm{G_2})&=&\bigg(\frac{1}{40(8-14r+7r^2)}\bigg)(6-13r+7r^2)(159+1802\sqrt{2}-36\sqrt{5}-\\&&90\sqrt{10}+(-107-150\sqrt{2}+12\sqrt{5}+54\sqrt{10})r+
10(5+3\sqrt{2})r^2).
\end{eqnarray*}
\qed
\end{proof}
Now, we compute $ReZG1$, $ReZG2$ and $ReZG3$ indices of triangular hex derived network $\mathrm{G_2}$.
\begin{theorem}
Let $\mathrm{G_2}$ be the triangular hex derived network, then\\ \\
$\bullet$ $ReZG1(\mathrm{G_2})$ = $\frac{3}{154}(3408-5117r+2009r^2)$; \\ \\
$\bullet$ $ReZG2(\mathrm{G_2})$ = $\frac{1}{2}(6-11r+7r^2)$; \\ \\
$\bullet$ $ReZG3(\mathrm{G_2})$ = $24(6192-5185r+1141r^2)$.
\end{theorem}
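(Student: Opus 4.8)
The plan is to reuse the six-class edge partition of $\mathrm{G_2}=THDN3(r)$ recorded in Table 3, whose validity was already established while proving the forgotten index; no fresh structural analysis of the network is required. For each of the three indices the summand in $(4)$, $(5)$, $(6)$ depends only on the unordered degree pair of an edge, so the entire computation reduces to evaluating a single weight on each of the six degree pairs and then forming the weighted sum of the edge counts.

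First I would tabulate these per-edge weights. For $ReZG1$, the summand $\frac{\kappa(\acute{p})\kappa(\acute{q})}{\kappa(\acute{p})+\kappa(\acute{q})}$ takes the values
\[
2,\quad \tfrac{20}{7},\quad \tfrac{36}{11},\quad 5,\quad \tfrac{45}{7},\quad 9
\]
on the classes $(4,4),(4,10),(4,18),(10,10),(10,18),(18,18)$ respectively; for $ReZG2$ the reciprocal-type summand $\frac{\kappa(\acute{p})+\kappa(\acute{q})}{\kappa(\acute{p})\kappa(\acute{q})}$ gives $\tfrac12,\tfrac{7}{20},\tfrac{11}{36},\tfrac15,\tfrac{7}{45},\tfrac19$; and for $ReZG3$ the product $(\kappa(\acute{p})\kappa(\acute{q}))(\kappa(\acute{p})+\kappa(\acute{q}))$ gives the integers $128,560,1584,2000,5040,11664$. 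These coincide with the coefficients appearing in the corresponding $\mathrm{G_1}$ computation, so they can be carried over directly.

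Next I would multiply each weight by the matching edge count from Table 3, namely $3r^2-6r+9$, $18r-30$, $6r^2-30r+36$, $3r-6$, $6r-18$ and $\tfrac{3r^2-21r+36}{2}$, and collect the coefficients of $r^2$, $r$ and the constant term. Reducing over a common denominator then yields the three closed forms: for $ReZG1$ the denominators $7$ and $11$, together with the halved count of the $(18,18)$ class, combine to the stated $154$; for $ReZG2$ they collapse to a clean $\tfrac12$; and for $ReZG3$ every term is already an integer multiple of $12$.

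The computation has no conceptual obstacle, so the only real care is arithmetic bookkeeping, and in particular handling the half-integer edge count $\tfrac{3r^2-21r+36}{2}$ of the $(18,18)$ class correctly, since it is the source of the factor $2$ in the denominators and is easy to mishandle when clearing fractions. Checking that the assembled coefficients match $\frac{3}{154}(3408-5117r+2009r^2)$, $\frac12(6-11r+7r^2)$ and $24(6192-5185r+1141r^2)$ then completes the proof.
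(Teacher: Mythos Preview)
Your proposal is correct and follows essentially the same route as the paper: both use the six-class degree-based edge partition of $THDN3(r)$ from Table~3, compute the per-edge weight of each index on each class (your weights match the paper's exactly), multiply by the corresponding edge counts, and simplify. The paper presents the same weighted sums $\sum_{j=1}^{6}w_j|E_j(\mathrm{G_2})|$ explicitly and then states the simplified polynomials, so your sketch is a faithful compression of its argument.
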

\begin{proof}
By using edge partition given in Table 3, the ReZG1$(\mathrm{G_2})$ can be calculated by using $(4)$ as follows.
\begin{eqnarray*}
ReZG1(\mathrm{G})&=&\sum_{\acute{p}\acute{q} \in \mathrm{E}(\mathrm{G})}\bigg(\frac{\kappa(\acute{p})\times \kappa(\acute{q})}{\kappa(\acute{p})+ \kappa(\acute{p})}\bigg)=\sum_{j=1}^6\sum_{\acute{p}\acute{q} \in E_j(\mathrm{G_2})}\bigg(\frac{\kappa(\acute{p})\times \kappa(\acute{q})}{\kappa(\acute{p})+ \kappa(\acute{p})}\bigg)
\end{eqnarray*}

\begin{eqnarray*}
ReZG1(\mathrm{G_2})&=&2|E_1(\mathrm{G_2})|+\frac{20}{7}|E_2(\mathrm{G_2})|+
\frac{36}{11}|E_3(\mathrm{G_2})|+5|E_4(\mathrm{G_2})|
+\\&&\frac{45}{7}|E_5(\mathrm{G_2})|+9|E_6(\mathrm{G_2})|
\end{eqnarray*}
By doing some calculation, we get\\
$$\Longrightarrow ReZG1(\mathrm{G_2})=\frac{3}{154}(3408-5117r+2009r^2)$$
The ReZG2$(\mathrm{G_2})$ can be calculated by using $(5)$ as follows.
$$ReZG2(\mathrm{G})=\sum_{\acute{p}\acute{q} \in \mathrm{E}(\mathrm{G})}\bigg(\frac{\kappa(\acute{p})+\kappa(\acute{q})}{\kappa(\acute{p})\times \kappa(\acute{p})}\bigg)=\sum_{j=1}^9\sum_{\acute{p}\acute{q} \in E_j(\mathrm{G_2})}\bigg(\frac{\kappa(\acute{p})+\kappa(\acute{q})}{\kappa(\acute{p})\times \kappa(\acute{p})}\bigg)$$
\begin{eqnarray*}
ReZG2(\mathrm{G_2})&=&\frac{1}{2}|E_1(\mathrm{G_2})|+\frac{7}{20}|E_2(\mathrm{G_2})|+
\frac{11}{36}|E_3(\mathrm{G_2})|+
\frac{1}{5}|E_4(\mathrm{G_2})|
+\\&&\frac{7}{45}|E_5(\mathrm{G_2})|+\frac{1}{9}|E_6(\mathrm{G_2})|
\end{eqnarray*}
By doing some calculation, we get\\
$$\Longrightarrow ReZG2(\mathrm{G_2})=\frac{1}{2}(6-11r+7r^2)$$
The ReZG3$(\mathrm{G_2})$ index can be calculated from (6) as follows.
\begin{eqnarray*}
ReZG3(\mathrm{G})&=&\sum_{\acute{p}\acute{q} \in \mathrm{E}(\mathrm{G})}(\kappa(\acute{p})\times\kappa(\acute{q}))(\kappa(\acute{p})+ \kappa(\acute{p})\\&&=\sum_{j=1}^{6}\sum_{\acute{p}\acute{q} \in E_j(\mathrm{G_2})}(\kappa(\acute{p})\times\kappa(\acute{q}))(\kappa(\acute{p})+ \kappa(\acute{p}))
\end{eqnarray*}
\begin{eqnarray*}
ReZG3(\mathrm{G_2})&=&128|E_1(\mathrm{G_2})|+560|E_2(\mathrm{G_2})|+1584|E_3(\mathrm{G_2})|+2000|E_4(\mathrm{G_2})|
+\\&&5040|E_5(\mathrm{G_2})|+11664|E_6(\mathrm{G_2})|
\end{eqnarray*}
By doing some calculation, we get\\
\begin{eqnarray*}
\Longrightarrow ReZG3(\mathrm{G_2})&=&24(6192-5185r+1141r^2)
\end{eqnarray*}
\end{proof}
\qed
\begin{table}[h]
\centering
\begin{tabular}{|c|c|c|c|c|}
   \hline
  $(\kappa_{x},\kappa_{y})$ where $\acute{p}\acute{q}\in E(\mathrm{G_2})$ & $\text{Number of edges}$&$(\kappa_{u},\kappa_{v})$ where $\acute{p}\acute{q}\in E(\mathrm{G_2})$&$\text{Number of edges}$ \\\hline
   $(22,22)$ & $3$&$(44,124)$ & $12$ \\\hline
   $(22,28)$ & $12$&$(44,140)$ & $24r-120$ \\\hline
   $(22,36)$ & $6$&$(44,156)$ & $6r^2-66r+180$\\\hline
   $(22,66)$ & $6r-12$& $(66,66)$ & $3$\\\hline
   $(28,66)$ & $24$& $(66,80)$ & $6$\\\hline
    $(28,80)$ & $6r-24$&$(66,124)$ & $6$ \\\hline
    $(36,36)$ & $6r-18$&$(80,80)$ & $3r-15$ \\\hline
   $(36,44)$ & $6r-24$&$(80,124)$ & $6$ \\\hline
   $(36,66)$ & $12$&$(80,140)$ & $6r-30$\\\hline
   $(36,80)$ & $12r-48$& $(124,140)$ & $6$\\\hline
   $(36,124)$ & $24$& $(140,140)$ & $3r-15$\\\hline
   $(36,140)$ & $12r-60$&$(140,156)$ & $6r-36$ \\\hline
   $(44,44)$ & $3r^2-24r+48$&$(156,156)$ & $\frac{3r^2-39r+126}{2}$\\\hline
\end{tabular}
\vspace{.2cm}
\caption{Edge partition of Hex Derived network of type 3 $HDN3(r)$ based on degrees of end vertices of each edge.}
\end{table}
Now, we compute $ABC_{4}$ and $GA_{5}$ indices of triangular hex derived network $\mathrm{G_2}$.
\begin{theorem}
Let $\mathrm{G_2}$ be the triangular hex derived network, then\\ \\
$\bullet$ $ABC_{4}(\mathrm{G_2})$ = $24.131+3\sqrt{\frac{7}{130}}(-6+r)+6\sqrt{\frac{26}{55}}(-5+r)+\sqrt{\frac{174}{35}}(-5+r)+\frac{3}{10} \sqrt{\frac{109}{14}}(-5+r)+\frac{3}{40}\sqrt{\frac{79}{2}}(-5+r)+\frac{3}{70}\sqrt{\frac{139}{2}}(-5+r)+\frac{3}{2} \sqrt{\frac{53}{70}}(-4+r)+\sqrt{\frac{39}{22}}(-4+r)+\sqrt{\frac{57}{10}}(-4+r)+\frac{3}{22}\sqrt{\frac{43}{2}}(-4+r)^2+\frac{1}{3} \sqrt{\frac{35}{2}}(-3+r)+2\sqrt{\frac{7}{11}}(-2+r)+\frac{1}{52}\sqrt{\frac{155}{2}}(42-13r+r^2)+3\sqrt{\frac{3}{26}}(30-11r+r^2)$; \\ \\
$\bullet$ $GA_5(\mathrm{G_2})$ = $110.66+ \frac{6}{37}\sqrt{1365}(-6+r)+\frac{24}{11}\sqrt{7}(-5+r)+\frac{18}{11}\sqrt{35}(-5+r)+\frac{24}{23} \sqrt{385}(-5+r)+\frac{144}{29}\sqrt{5}(-4+r)+\frac{9}{5}\sqrt{11}(-4+r)+\frac{8}{9}\sqrt{35}(-4+r)+\frac{36}{29} \sqrt{22}(-2+r)-12r+3r^2+\frac{3}{2}(42-13r+r^2)+\frac{6}{25}\sqrt{429}(30-11r+r^2)$.
\end{theorem}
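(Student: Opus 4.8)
The plan is to mirror the computational scheme already used for the forgotten and Balaban indices of $\mathrm{G_2}$, the only new ingredient being that the relevant partition of $E(\mathrm{G_2})$ is now governed by the neighbour-degree sums $S_{\acute{p}}=\sum_{\acute{q}\in N(\acute{p})}\kappa(\acute{q})$ rather than by the raw degrees. Concretely, I would first establish the edge partition recorded in Table 4: each edge $\acute{p}\acute{q}$ is assigned to a class according to the ordered pair $(S_{\acute{p}},S_{\acute{q}})$, and for each such pair the number of edges is a polynomial in $r$. Once this partition is in hand, both indices reduce to finite weighted sums over the twenty-six classes, and formulas (7) and (8) are applied class by class.

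For $ABC_4$ I would substitute each pair $(S_{\acute{p}},S_{\acute{q}})$ from Table 4 into $\sqrt{(S_{\acute{p}}+S_{\acute{q}}-2)/(S_{\acute{p}}S_{\acute{q}})}$ to obtain a single numerical (generally irrational) coefficient, multiply it by the corresponding edge count $|E_j(\mathrm{G_2})|$, and add the contributions, exactly as displayed in the long intermediate expansion. The identical procedure with the weight $2\sqrt{S_{\acute{p}}S_{\acute{q}}}/(S_{\acute{p}}+S_{\acute{q}})$ yields $GA_5$. Because every edge count is at most quadratic in $r$, the resulting expression is automatically a linear combination of $1$, $r$ and $r^2$ whose coefficients are built from surds such as $\sqrt{2}$, $\sqrt{7}$, $\sqrt{11}$, $\sqrt{35}$, $\sqrt{385}$, $\sqrt{429}$ and $\sqrt{1365}$; collecting like powers of $r$ and evaluating the purely numerical constant term to the stated decimal (for instance $24.131$ for $ABC_4$) finishes the computation. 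This collecting step is routine algebra and I would not grind through it here.

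The genuine work, and the step I expect to be the main obstacle, is justifying Table 4 itself. This requires computing $S_{\acute{p}}$ for every vertex of $THDN3(r)$, which means knowing not only each vertex's degree but the multiset of degrees of all its neighbours, information that varies with the vertex's position in the network. The interior vertices (degrees $4$, $10$, $18$ arising from the planar-octahedron replacement) have stable neighbourhoods, but the vertices lying on or near the triangular boundary produce the many low-count classes, namely the constant and linear-in-$r$ entries of Table 4, and these corner and edge cases must be enumerated by hand and checked for consistency. The decisive internal check is that the class sizes sum to the total number of edges of $\mathrm{G_2}$, a value consistent with its $\tfrac{7r^2-11r+6}{2}$ vertices; once Table 4 is verified against this edge total, the remaining index computations follow mechanically from formulas (7) and (8).
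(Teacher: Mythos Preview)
Your proposal is correct and follows essentially the same approach as the paper: use the $(S_{\acute{p}},S_{\acute{q}})$-based edge partition recorded in Table~4, substitute each pair into the weights from formulas~(7) and~(8), multiply by the corresponding edge count, and collect terms. The paper carries this out by writing the full weighted sum over the twenty-six classes and then simplifying, exactly as you describe; your remark that justifying Table~4 is the substantive step is apt, though the paper itself simply asserts the partition without further argument.
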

\begin{proof}
By using edge partition given in Table 4, the $ABC_{4}(\mathrm{G_2})$ can be calculated by using $(7)$ as follows.
$$ABC_{4}(\mathrm{G})=\sum_{\acute{p}\acute{q} \in \mathrm{E}(\mathrm{G})}\sqrt{\frac{S_{\acute{p}}+S_{\acute{q}}-2}{S_{\acute{p}}S_{\acute{q}}}}=\sum_{j=7}^{32}\sum_{\acute{p}\acute{q} \in E_j(\mathrm{G_2})}\sqrt{\frac{S_{\acute{p}}+S_{\acute{q}}-2}{S_{\acute{p}}S_{\acute{q}}}}$$
\begin{eqnarray*}
ABC_4(\mathrm{G_2})&=&\frac{1}{11}\sqrt{\frac{21}{2}}|E_{7}(\mathrm{G_2})|+\sqrt{\frac{6}{77}}|E_{8}(\mathrm{G_2})|+
\frac{1}{3}\sqrt{\frac{7}{11}}|E_{9}(\mathrm{G_2})|+\frac{1}{11}\sqrt{\frac{43}{6}}|E_{10}(\mathrm{G_2})|+\\&&
\sqrt{\frac{23}{462}}|E_{11}(\mathrm{G_2})|+\frac{1}{4}\sqrt{\frac{53}{70}}|E_{12}(\mathrm{G_2})|+
\frac{1}{18}\sqrt{\frac{32}{2}}|E_{13}(\mathrm{G_2})|+\\&&\frac{1}{2}\sqrt{\frac{13}{66}}|E_{14}(\mathrm{G_2})|+
\frac{5}{3}\frac{1}{\sqrt{6}}|E_{15}(\mathrm{G_2})|+
\frac{1}{4}\sqrt{\frac{19}{30}}|E_{16}(\mathrm{G_2})|+\frac{1}{6}\sqrt{\frac{79}{62}}|E_{17}(\mathrm{G_2})|+\\&&
\frac{1}{2}\sqrt{\frac{29}{210}}|E_{18}(\mathrm{G_2})|
+\frac{1}{22}\sqrt{\frac{43}{2}}|E_{19}(\mathrm{G_2})|+\frac{1}{2}\sqrt{\frac{83}{682}}|E_{20}(\mathrm{G_2})|+\\&&
\frac{1}{2}\sqrt{\frac{13}{110}}|E_{21}(\mathrm{G_2})|
+\frac{1}{2}\sqrt{\frac{3}{26}}|E_{22}(\mathrm{G_2})|+\frac{1}{33}\sqrt{\frac{65}{2}}|E_{23}(\mathrm{G_2})|+\\&&
\sqrt{\frac{3}{110}}|E_{24}(\mathrm{G_2})|+\sqrt{\frac{47}{2046}}|E_{25}(\mathrm{G_2})|
+\frac{1}{40}\sqrt{\frac{79}{2}}|E_{26}(\mathrm{G_2})|
+\\&&\frac{1}{4}\sqrt{\frac{101}{310}}|E_{27}(\mathrm{G_2})|
+\frac{1}{20}\sqrt{\frac{109}{14}}|E_{28}(\mathrm{G_2})|
+\frac{1}{2}\sqrt{\frac{131}{2170}}|E_{29}(\mathrm{G_2})|
+\\&&\frac{1}{70}\sqrt{\frac{139}{2}}|E_{30}(\mathrm{G_2})|
+\frac{1}{2}\sqrt{\frac{7}{130}}|E_{31}(\mathrm{G_2})|
+\frac{1}{78}\sqrt{\frac{155}{2}}|E_{32}(\mathrm{G_2})
\end{eqnarray*}
By doing some calculation, we get\\
\begin{eqnarray*}
\Longrightarrow ABC_4(\mathrm{G_2})&=&24.131+3\sqrt{\frac{7}{130}}(-6+r)+6\sqrt{\frac{26}{55}}(-5+r)+\sqrt{\frac{174}{35}}(-5+r)+\\&&\frac{3}{10} \sqrt{\frac{109}{14}}(-5+r)+\frac{3}{40}\sqrt{\frac{79}{2}}(-5+r)+\frac{3}{70}\sqrt{\frac{139}{2}}(-5+r)+\\&&\frac{3}{2} \sqrt{\frac{53}{70}}(-4+r)+\sqrt{\frac{39}{22}}(-4+r)+\sqrt{\frac{57}{10}}(-4+r)+\frac{3}{22}\\&&\sqrt{\frac{43}{2}}(-4+r)^2+\frac{1}{3} \sqrt{\frac{35}{2}}(-3+r)+2\sqrt{\frac{7}{11}}(-2+r)+\frac{1}{52}\\&&\sqrt{\frac{155}{2}}(42-13r+r^2)+3\sqrt{\frac{3}{26}}(30-11r+r^2)
\end{eqnarray*}
The $GA_{5}(\mathrm{G_2})$ index can be calculated from (8) as follows.
\begin{eqnarray*}
GA_{5}(\mathrm{G})=\sum_{\acute{p}\acute{q} \in \mathrm{E}(\mathrm{G})}\frac{2\sqrt{S_{\acute{p}}S_{\acute{q}}}}{(S_{\acute{p}}+S_{\acute{q}})}=\sum_{j=7}^{32}\sum_{\acute{p}\acute{q} \in E_j(\mathrm{G_2})}\frac{2\sqrt{S_{\acute{p}}S_{\acute{q}}}}{(S_{\acute{p}}+S_{\acute{q}})}
\end{eqnarray*}
\begin{eqnarray*}
GA_5(\mathrm{G_2})&=&1|E_{7}(\mathrm{G_2})|+\frac{2}{25}\sqrt{154}|E_{8}(\mathrm{G_2})|+
\frac{6}{29}\sqrt{22}|E_{9}(\mathrm{G_2})|+\frac{1}{2}\sqrt{3}|E_{10}(\mathrm{G_2})|+\\&&\frac{2}{47}\sqrt{462}|E_{11}(\mathrm{G_2})|+
\frac{4}{27}\sqrt{35}|E_{12}(\mathrm{G_2})|+
1|E_{13}(\mathrm{G_2})|+\\&&\frac{3}{10}\sqrt{11}|E_{14}(\mathrm{G_2})|+
\frac{2}{17}\sqrt{66}|E_{15}(\mathrm{G_2})|+
\frac{12}{29}\sqrt{5}|E_{16}(\mathrm{G_2})|+\\&&\frac{3}{20}\sqrt{31}|E_{17}(\mathrm{G_2})|+
\frac{3}{22}\sqrt{35}|E_{18}(\mathrm{G_2})|
+1|E_{19}(\mathrm{G_2})|+\\&&\frac{1}{21}\sqrt{341}|E_{20}(\mathrm{G_2})|+
\frac{1}{23}\sqrt{385}|E_{21}(\mathrm{G_2})|
+\frac{1}{25}\sqrt{429}|E_{22}(\mathrm{G_2})|+\\&&1|E_{23}(\mathrm{G_2})|+
\frac{4}{73}\sqrt{330}|E_{24}(\mathrm{G_2})|+
\frac{2}{95}\sqrt{2046}|E_{25}(\mathrm{G_2})|+1|E_{26}(\mathrm{G_2})|+
\\&&\frac{4}{51}\sqrt{155}|E_{27}(\mathrm{G_2})|
+\frac{4}{11}\sqrt{7}|E_{28}(\mathrm{G_2})|
+\frac{1}{33}\sqrt{1085}|E_{29}(\mathrm{G_2})|
+\\&&1|E_{30}(\mathrm{G_2})|
+\frac{1}{37}\sqrt{1365}|E_{31}(\mathrm{G_2})|
+1|E_{32}(\mathrm{G_2})|
\end{eqnarray*}
By doing some calculation, we get\\
\begin{eqnarray*}
\Longrightarrow GA_5(\mathrm{G_2})&=& 315.338+\frac{288}{29}\sqrt{5}(-4+r)+\frac{48}{11}\sqrt{7}(-4+r)+\frac{16}{9}\sqrt{35}(-4+r)+\frac{9}{2}\\&& \sqrt{7}(-3+r)+\frac{36}{11}\sqrt{35}(-3+r)+\frac{48}{23}\sqrt{385}(-3+r)+\frac{12}{37}\sqrt{1365}(-3+r)+\\&&\frac{18}{5}\sqrt{11}(-2+r)-99r+27r^2+\frac{12}{25} \sqrt{429}(19-15r+3r^2)
\end{eqnarray*}
\end{proof}
\subsection{Results for Third type of Rectangular Hex Derived netwrok $RHDN3(r)$}
Now, we calculate certain degree based topological indices of Rectangular Hex Derived network of type 3, $RHDN3(r,s)$ of dimension $r=s$. We compute forgotten index, Balaban index, reclassified the Zagreb indices, forth version of $ABC$ index and fifth version of $GA$ index in the coming theorems of $RHDN3(r,s)$.
\begin{theorem}
Consider the third type of rectangular hex derived network $RHDN3(n)$, then its forgotten index is equal to
\begin{equation*}
F(HDN3(n))=19726-20096r+6216r^2
\end{equation*}
\end{theorem}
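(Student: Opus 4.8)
The plan is to reuse verbatim the edge-partition strategy that produced the forgotten indices of $HDN3(r)$ and $THDN3(r)$. First I would record the order of $RHDN3(r)$ and partition its edge set $E(\mathrm{G})$ according to the degree pair $(\kappa(\acute{p}),\kappa(\acute{q}))$ of the endpoints of each edge. As in the two previous cases, the network is obtained from the hexagonal mesh by replacing every $K_3$ subgraph with a planar octahedron, so the only vertex degrees occurring are $4,7,10,18$; hence every partition class is indexed by a pair drawn from $(4,4),(4,7),(4,10),(4,18),(7,10),(7,18),(10,10),(10,18),(18,18)$, and each class cardinality is a polynomial in $r$ of degree at most two, the boundary classes supplying the constant and linear terms.

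Once this edge partition (an analogue of Tables 1 and 3) is established, the forgotten index follows mechanically from definition $(2)$. For an edge whose endpoints have degrees $a$ and $b$ the summand is $a^2+b^2$, so the per-class weights are exactly the numbers $32,65,116,340,149,373,200,424,648$ already used in the $HDN3$ and $THDN3$ computations. I would then write
\begin{equation*}
F(\mathrm{G})=\sum_{j}\big((\kappa(\acute{p}))^2+(\kappa(\acute{q}))^2\big)\,|E_j(\mathrm{G})|,
\end{equation*}
substitute the edge counts, and collect like powers of $r$. Since the claimed leading coefficient is $6216$ and $32\cdot 6+340\cdot 12+648\cdot 3=6216$, I expect the three quadratic classes to be $(4,4)$, $(4,18)$ and $(18,18)$ with cardinalities $6r^2$, $12r^2$ and $3r^2$ up to lower-order corrections; this matches the picture of $RHDN3$ as essentially two copies of the triangular network $THDN3$, whose quadratic part is governed by $3108=6216/2$.

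The only genuine difficulty lies in the first step, namely reading off the correct edge counts from the figure of $RHDN3(r)$. A miscount in any of the quadratic classes $(4,4)$, $(4,18)$, $(18,18)$ would corrupt the leading $6216\,r^2$ term, whereas errors in the constant or linear boundary classes would shift the $19726$ and $-20096\,r$ contributions. Once these counts are verified, multiplying each by its fixed weight $a^2+b^2$ and summing is entirely routine and yields $F(\mathrm{G})=19726-20096r+6216r^2$.
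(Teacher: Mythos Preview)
Your plan is exactly the paper's own argument: it partitions $E(RHDN3(r))$ into the nine degree-pair classes you list, with the precise cardinalities $6r^2-12r+10$, $8$, $24r-44$, $12r^2-48r+48$, $4$, $2$, $4r-10$, $8r-20$, $3r^2-16r+21$ (Table~5), multiplies by the same weights $32,65,116,340,149,373,200,424,648$, and sums. Your prediction of the quadratic coefficients $6,12,3$ for the $(4,4)$, $(4,18)$, $(18,18)$ classes is confirmed by that table; the step you flag as the only difficulty---extracting these counts from the figure---is where the paper simply states them without further justification.
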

\begin{figure}
\centering
  \includegraphics[width=5cm]{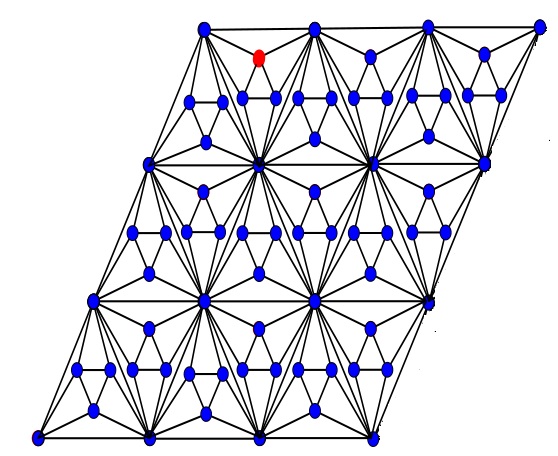}\\
  \caption{Hex Derived network of type 3 ($HDN3(4)$)}\label{fig 2}
\end{figure}
\vspace{1cm}
\begin{table}
\centering
\begin{tabular}{|c|c|c|c|c|}
   \hline
  $(\kappa_{x},\kappa_{y})$ where $\acute{p}\acute{q}\in E(\mathrm{G_1})$ & $\text{Number of edges}$&$(\kappa_{u},\kappa_{v})$ where $\acute{p}\acute{q}\in E(\mathrm{G_1})$&$\text{Number of edges}$ \\\hline
   $(4,4)$ & $6r^2-12r+10$&$(7,18)$ & $2$ \\\hline
   $(4,7)$ & $8$&$(10,10)$ & $4r-10$ \\\hline
   $(4,10)$ & $24r-44$&$(10,18)$ & $8r-20$\\\hline
   $(4,18)$ & $12r^2-48r+48$& $(18,18)$ & $3r^2-16r+21$\\\hline
   $(7,10)$ & $4$&&\\\hline
\end{tabular}
\vspace{.2cm}
\caption{Edge partition of Rectangular Hex Derived network of type 3, $RHDN3(r)$ based on degrees of end vertices of each edge.}
\end{table}
\vspace{-1cm}
\begin{proof}
Let $\mathrm{G_3}$ be the Rectangular Hex Derived network of type 3, $RHDN3(r)$ shown in Fig. \ref{fig 3}, where $r=s\geq4$. The Rectangular Hex Derived network $\mathrm{G_3}$ has $7r^2-12r+6$ vertices and the edge set of $\mathrm{G_3}$ is divided into nine partitions based on the degree of end vertices. The first edge partition $E_1(\mathrm{G_3})$ contains $6r^2-12r+10$ edges $\acute{p}\acute{q}$, where $\kappa(\acute{p})=\kappa(\acute{q})=4$. The second edge partition $E_2(\mathrm{G_1})$ contains $8$ edges $\acute{p}\acute{q}$, where $\kappa(\acute{p})=4$ and $\kappa(\acute{q})=7$. The third edge partition $E_3(\mathrm{G_3})$ contains $24r-44$ edges $\acute{p}\acute{q}$, where $\kappa(\acute{p})=4$ and $\kappa(\acute{q})=10$. The fourth edge partition $E_4(\mathrm{G_3})$ contains $12r^2-48r+48$ edges $\acute{p}\acute{q}$, where $\kappa(\acute{p})=4$ and $\kappa(\acute{q})=18$. The fifth edge partition $E_5(\mathrm{G_3})$ contains $4$ edges $\acute{p}\acute{q}$, where $\kappa(\acute{p})=7$ and $\kappa(\acute{q})=10$. The sixth edge partition $E_6(\mathrm{G_3})$ contains $2$ edges $\acute{p}\acute{q}$, where $\kappa(\acute{p})=7$ and $\kappa(\acute{q})=18$. The seventh edge partition $E_7(\mathrm{G_3})$ contains $4r-10$ edges $\acute{p}\acute{q}$, where $\kappa(\acute{p})=\kappa(\acute{q})=10$ and the eighth edge partition $E_8(\mathrm{G_3})$ contains $8r-20$ edges $\acute{p}\acute{q}$, where $\kappa(\acute{p})=10$ and $\kappa(\acute{q})=18$ and the ninth edge partition $E_9(\mathrm{G_3})$ contains $3r^2-16r+21$ edges $\acute{p}\acute{q}$, where $\kappa(\acute{p})=\kappa(\acute{q})=18$, Table 3 shows such an edge partition of $\mathrm{G_3}$. Thus from $(3)$ this follows that
\begin{equation*}
F(G)=\sum_{\acute{p}\acute{q} \in \mathrm{E}(\mathrm{G})}((\kappa(\acute{p}))^2+(\kappa(\acute{q}))^2)
\end{equation*}
Let $\mathrm{G_3}$ be the third type of triangular hex derived network, $THDN3(r)$. By using edge partition from Table 3, the result follows. The forgotten index can be calculated by using (2) as follows.
\begin{equation*}
  F(\mathrm{G_3})=\sum_{\acute{p}\acute{q} \in \mathrm{E}(\mathrm{G})}((\kappa(\acute{p}))^2+(\kappa(\acute{q}))^2)=\sum_{\acute{p}\acute{q} \in \mathrm{E_{j}}(\mathrm{G})}\sum_{j=1}^9((\kappa(\acute{p}))^2+(\kappa(\acute{q}))^2)
\end{equation*}
\begin{eqnarray*}
F(\mathrm{G_3})&=&32|E_1(\mathrm{G_3})|+65|E_2(\mathrm{G_3})|+116|E_3(\mathrm{G_3})|+340|E_4(\mathrm{G_3})|+
149|E_5(\mathrm{G_3})|+\\&&373|E_6(\mathrm{G_3})|+200|E_7(\mathrm{G_3})|+424|E_8(\mathrm{G_3})|+648|E_9(\mathrm{G_3})|
\end{eqnarray*}
\end{proof}
By doing some calculations, we get\\
$$\Longrightarrow F(\mathrm{G_3})=19726-20096r+6216r^2.$$
\qed

In the following theorem, we compute Balaban index of third type of triangular hex Derived network, $\mathrm{G_3}$.
\begin{theorem}
For triangular hex derived network $\mathrm{G_3}$, the Balaban index is equal to
\begin{eqnarray*}
J(\mathrm{G_3})&=&\bigg(\frac{1}{315(15-28r+14r^2)}\bigg)7(-157-180\sqrt{2}+12\sqrt{5}+54\sqrt{10})r+\\&&105(5+3\sqrt{2})r^2)
(19-40r+21r^2)(3(280+420\sqrt{2}-70\sqrt{5}+\\&&60\sqrt{7}-231\sqrt{10}+5\sqrt{14}+6\sqrt{70}))
\end{eqnarray*}
\end{theorem}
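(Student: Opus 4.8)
The plan is to mirror the Balaban-index computation already carried out for $\mathrm{G_1}=HDN3(r)$, since $\mathrm{G_3}=RHDN3(r)$ carries exactly the same nine degree-pair classes $(4,4),(4,7),(4,10),(4,18),(7,10),(7,18),(10,10),(10,18),(18,18)$; only the cardinalities $|E_j|$ and the vertex count change with the rectangular geometry. First I would read off from the preceding edge-partition table that $\mathrm{G_3}$ has $n=7r^2-12r+6$ vertices, and then compute the edge total $m=\sum_{j=1}^{9}|E_j|$ by adding the nine cardinalities class by class. Collecting the $r^2$, $r$, and constant contributions gives $m=21r^2-40r+19$, whence $m-n+2=14r^2-28r+15$ and the Balaban prefactor
\[
\frac{m}{m-n+2}=\frac{19-40r+21r^2}{15-28r+14r^2},
\]
which already supplies the denominator factor $(15-28r+14r^2)$ and the factor $(19-40r+21r^2)$ that appear in the claimed closed form.

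Next I would evaluate the edge sum
\[
S=\sum_{\acute{p}\acute{q}\in E(\mathrm{G_3})}\frac{1}{\sqrt{\kappa(\acute{p})\,\kappa(\acute{q})}}
\]
by substituting each degree pair into $1/\sqrt{\kappa(\acute{p})\kappa(\acute{q})}$ and weighting by the corresponding cardinality, exactly as in the $\mathrm{G_1}$ argument: the $(4,4)$ class contributes the factor $\tfrac14$, the $(4,7)$ class $\tfrac1{2\sqrt7}$, the $(4,10)$ class $\tfrac1{2\sqrt{10}}$, the $(4,18)$ class $\tfrac1{6\sqrt2}$, the $(7,10)$ class $\tfrac1{\sqrt{70}}$, the $(7,18)$ class $\tfrac1{3\sqrt{14}}$, the $(10,10)$ class $\tfrac1{10}$, the $(10,18)$ class $\tfrac1{6\sqrt5}$, and the $(18,18)$ class $\tfrac1{18}$. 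I would then group $S$ by its rational part and by the coefficients of the six distinct radicals $\sqrt2,\sqrt5,\sqrt7,\sqrt{10},\sqrt{14},\sqrt{70}$, writing each coefficient as a polynomial in $r$ and placing everything over a single common denominator; after rationalizing, the surds $280+420\sqrt2-70\sqrt5+60\sqrt7-231\sqrt{10}+5\sqrt{14}+6\sqrt{70}$ collect into the constant term while the brackets $7(-157-180\sqrt2+12\sqrt5+54\sqrt{10})r$ and $105(5+3\sqrt2)r^2$ collect into the linear and quadratic terms, producing the factor $315$ in the stated form.

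Finally I would form $J(\mathrm{G_3})=\dfrac{m}{m-n+2}\,S$, insert the two pieces above, and collect over the single denominator $315(15-28r+14r^2)$ to reach the displayed expression. No new geometric input is required beyond the edge partition already tabulated; the entire argument is the substitution-and-simplify scheme used for $\mathrm{G_1}$ and $\mathrm{G_2}$. The routine but error-prone part is the surd bookkeeping in the second step: each of the six radicals must be tracked separately since its coefficient is a different polynomial in $r$, and the common denominator must be maintained faithfully so that the final multiplication by the quadratic $(19-40r+21r^2)$ reproduces the quartic numerator exactly. This is where I expect essentially all the difficulty to lie, the rest being a direct transcription of the $\mathrm{G_1}$ computation with the new cardinalities.
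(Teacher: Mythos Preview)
Your proposal is correct and follows essentially the same approach as the paper: it uses the nine-class degree-based edge partition of $\mathrm{G_3}=RHDN3(r)$ from Table~5, forms the prefactor $\dfrac{m}{m-n+2}=\dfrac{19-40r+21r^2}{15-28r+14r^2}$ from $n=7r^2-12r+6$ and $m=21r^2-40r+19$, multiplies by the weighted sum $\tfrac14|E_1|+\tfrac1{2\sqrt7}|E_2|+\cdots+\tfrac1{18}|E_9|$, and simplifies. Your explicit surd-by-surd bookkeeping is in fact more detailed than the paper's own ``By doing some calculation, we get'' step.
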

\begin{proof}
Let $\mathrm{G_3}$ be the rectangular hex derived network $RHDN3(r)$. By using edge partition from Table 5, the result follows. The Balaban index can be calculated by using (3) as follows.
\begin{eqnarray*}
 J(\mathrm{G_3})&=&\bigg(\frac{m}{m-n+2}\bigg)\sum_{\acute{p}\acute{q} \in \mathrm{E}(\mathrm{G})}\frac{1}{\sqrt{\kappa(\acute{p})\times \kappa(\acute{q})}}=\bigg(\frac{m}{m-n+2}\bigg)\sum_{\acute{p}\acute{q} \in \mathrm{E_j}(\mathrm{G})}\sum_{j=1}^9\\&&\frac{1}{\sqrt{\kappa(\acute{p})\times \kappa(\acute{q})}}
\end{eqnarray*}
\begin{eqnarray*}
J(\mathrm{G_3})&=&\bigg(\frac{19-40r+21r^2}{15-28r+14r^2}\bigg)\bigg(\frac{1}{4}|E_1(\mathrm{G_3})|+\frac{1}{2\sqrt{7}}|E_2(\mathrm{G_3})|+
\frac{1}{2\sqrt{10}}|E_3(\mathrm{G_3})|+\\&&\frac{1}{6\sqrt{2}}|E_4(\mathrm{G_3})|+\frac{1}{\sqrt{70}}
|E_5(\mathrm{G_3})|+\frac{1}{3\sqrt{14}}|E_6(\mathrm{G_3})|+\frac{1}{10}|E_7(\mathrm{G_3})|+\\&&
\frac{1}{6\sqrt{5}}|E_8(\mathrm{G_3})|+\frac{1}{18}|E_9(\mathrm{G_3})|\bigg)
\end{eqnarray*}
By doing some calculation, we get
\begin{eqnarray*}
\Longrightarrow J(\mathrm{G_3})&=&\bigg(\frac{1}{315(15-28r+14r^2)}\bigg)7(-157-180\sqrt{2}+12\sqrt{5}+54\sqrt{10})r+\\&&105(5+3\sqrt{2})r^2)
(19-40r+21r^2)(3(280+420\sqrt{2}-70\sqrt{5}+\\&&60\sqrt{7}-231\sqrt{10}+5\sqrt{14}+6\sqrt{70})).
\end{eqnarray*}
\qed
\end{proof}
Now, we compute $ReZG1$, $ReZG2$ and $ReZG3$ indices of triangular hex derived network $\mathrm{G_3}$.
\begin{theorem}
Let $\mathrm{G_3}$ be the rectangular hex derived network, then\\ \\
$\bullet$ $ReZG1(\mathrm{G_3})$ = $\frac{10102843}{32725}-\frac{2036r}{11}+\frac{861n^2}{11}$; \\ \\
$\bullet$ $ReZG2(\mathrm{G_3})$ = $56-12r+7r^2$; \\ \\
$\bullet$ $ReZG3(\mathrm{G_3})$ = $4(50785-50608r+13692r^2)$.
\end{theorem}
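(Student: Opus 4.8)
The plan is to reproduce verbatim the three-part computation already carried out for $\mathrm{G_1}$, since $RHDN3(r)$ realises exactly the same nine degree-pairs $(\kappa(\acute{p}),\kappa(\acute{q}))$ as $HDN3(r)$, namely $(4,4),(4,7),(4,10),(4,18),(7,10),(7,18),(10,10),(10,18),(18,18)$, and differs only in the edge multiplicities recorded in Table 5. Because each of $ReZG1$, $ReZG2$, $ReZG3$ assigns to an edge a weight that depends solely on the unordered pair of its endpoint degrees, the nine per-class weights are identical to those obtained earlier for $\mathrm{G_1}$: for $ReZG1$ they are $2,\tfrac{28}{11},\tfrac{20}{7},\tfrac{36}{11},\tfrac{70}{17},\tfrac{126}{25},5,\tfrac{45}{7},9$; for $ReZG2$ their reciprocals $\tfrac12,\tfrac{11}{28},\tfrac{7}{20},\tfrac{11}{36},\tfrac{17}{70},\tfrac{25}{126},\tfrac15,\tfrac{7}{45},\tfrac19$; and for $ReZG3$ the products $128,308,560,1584,1190,3150,2000,5040,11664$.

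Writing $|E_j|$ for $|E_j(\mathrm{G_3})|$, the first step is to expand the defining sums $(4)$, $(5)$, $(6)$ over this partition, e.g.
\[
ReZG1(\mathrm{G_3})=2|E_1|+\tfrac{28}{11}|E_2|+\tfrac{20}{7}|E_3|+\tfrac{36}{11}|E_4|+\tfrac{70}{17}|E_5|+\tfrac{126}{25}|E_6|+5|E_7|+\tfrac{45}{7}|E_8|+9|E_9|,
\]
together with the analogous linear combinations for $ReZG2$ and $ReZG3$. The second step is to insert the closed counts $|E_1|=6r^2-12r+10$, $|E_2|=8$, $|E_3|=24r-44$, $|E_4|=12r^2-48r+48$, $|E_5|=4$, $|E_6|=2$, $|E_7|=4r-10$, $|E_8|=8r-20$, $|E_9|=3r^2-16r+21$, and then to collect the result by powers of $r$.

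The only genuine difficulty is arithmetic bookkeeping. For $ReZG1$ (and likewise $ReZG2$) the weights carry the pairwise-coprime denominators $7,11,17,25$, so the constant, linear and quadratic coefficients emerge only after everything is placed over the common denominator $32725=7\cdot 11\cdot 17\cdot 25$; the constant term is by far the most error-prone, being assembled from the three fixed-size classes $8\cdot\tfrac{28}{11}+4\cdot\tfrac{70}{17}+2\cdot\tfrac{126}{25}$ together with the constant parts of the six polynomial counts, and this is where I would concentrate the verification. By contrast $ReZG3$ has integer weights throughout, so its collection is routine and yields $4(50785-50608r+13692r^2)$.

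As an independent check I would isolate the quadratic coefficient, which receives contributions only from the classes $E_1,E_4,E_9$ whose counts have a nonzero $r^2$-term: for $ReZG1$ this is $2\cdot 6+\tfrac{36}{11}\cdot 12+9\cdot 3=\tfrac{861}{11}$, matching the stated leading term (with the evident misprint of $n$ for $r$); for $ReZG2$ it is $\tfrac12\cdot 6+\tfrac{11}{36}\cdot 12+\tfrac19\cdot 3=7$; and for $ReZG3$ it is $128\cdot 6+1584\cdot 12+11664\cdot 3=54768=4\cdot 13692$. Agreement of all three leading coefficients, followed by the same checks on the linear and constant terms, would complete the argument, with the constant terms being the place I would scrutinise most carefully before certifying the stated values.
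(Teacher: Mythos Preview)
Your proposal is correct and follows essentially the same approach as the paper: both arguments expand each redefined Zagreb index as a weighted sum over the nine degree-pair classes of Table~5, using precisely the weights you list, substitute the polynomial edge counts $|E_1|,\dots,|E_9|$, and collect by powers of $r$. Your added sanity check on the $r^2$-coefficients is not in the paper but is a welcome extra verification and does not alter the method.
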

\begin{proof}
By using edge partition given in Table 3, the ReZG1$(\mathrm{G_3})$ can be calculated by using $(4)$ as follows.
\begin{eqnarray*}
ReZG1(\mathrm{G})&=&\sum_{\acute{p}\acute{q} \in \mathrm{E}(\mathrm{G})}\bigg(\frac{\kappa(\acute{p})\times \kappa(\acute{q})}{\kappa(\acute{p})+ \kappa(\acute{p})}\bigg)=\sum_{j=1}^9\sum_{\acute{p}\acute{q} \in E_j(\mathrm{G_3})}\bigg(\frac{\kappa(\acute{p})\times \kappa(\acute{q})}{\kappa(\acute{p})+ \kappa(\acute{p})}\bigg)
\end{eqnarray*}

\begin{eqnarray*}
ReZG1(\mathrm{G_3})&=&2|E_1(\mathrm{G_3})|+\frac{28}{11}|E_2(\mathrm{G_3})|+
\frac{20}{7}|E_3(\mathrm{G_3})|+\frac{36}{11}|E_4(\mathrm{G_3})|
+\\&&\frac{70}{17}|E_5(\mathrm{G_3})|+\frac{126}{25}|E_6(\mathrm{G_3})|+5|E_7(\mathrm{G_3})|+
\frac{45}{7}|E_8(\mathrm{G_3})|+\\&&9|E_9(\mathrm{G_3})|
\end{eqnarray*}
By doing some calculation, we get\\
$$\Longrightarrow ReZG1(\mathrm{G_3})=\frac{10102843}{32725}-\frac{2036r}{11}+\frac{861n^2}{11}$$
The ReZG2$(\mathrm{G_3})$ can be calculated by using $(5)$ as follows.
$$ReZG2(\mathrm{G})=\sum_{\acute{p}\acute{q} \in \mathrm{E}(\mathrm{G})}\bigg(\frac{\kappa(\acute{p})+\kappa(\acute{q})}{\kappa(\acute{p})\times \kappa(\acute{p})}\bigg)=\sum_{j=1}^9\sum_{\acute{p}\acute{q} \in E_j(\mathrm{G_3})}\bigg(\frac{\kappa(\acute{p})+\kappa(\acute{q})}{\kappa(\acute{p})\times \kappa(\acute{p})}\bigg)$$
\begin{eqnarray*}
ReZG2(\mathrm{G_3})&=&\frac{1}{2}|E_1(\mathrm{G_3})|+\frac{11}{28}|E_2(\mathrm{G_3})|+
\frac{7}{20}|E_3(\mathrm{G_3})|+
\frac{11}{36}|E_4(\mathrm{G_3})|
+\\&&\frac{17}{70}|E_5(\mathrm{G_3})|+\frac{25}{126}|E_6(\mathrm{G_3})|+\frac{1}{5}|E_7(\mathrm{G_3})|+\frac{7}{45}|E_8(\mathrm{G_3})|
+\\&&\frac{1}{9}|E_9(\mathrm{G_3})|
\end{eqnarray*}
By doing some calculation, we get\\
$$\Longrightarrow ReZG2(\mathrm{G_3})=56-12r+7r^2$$
The ReZG3$(\mathrm{G_3})$ index can be calculated from (6) as follows.
\begin{eqnarray*}
ReZG3(\mathrm{G})&=&\sum_{\acute{p}\acute{q} \in \mathrm{E}(\mathrm{G})}(\kappa(\acute{p})\times\kappa(\acute{q}))(\kappa(\acute{p})+ \kappa(\acute{p})\\&&=\sum_{j=1}^{6}\sum_{\acute{p}\acute{q} \in E_j(\mathrm{G_3})}(\kappa(\acute{p})\times\kappa(\acute{q}))(\kappa(\acute{p})+ \kappa(\acute{p}))
\end{eqnarray*}
\begin{eqnarray*}
ReZG3(\mathrm{G_3})&=&128|E_1(\mathrm{G_3})|+308|E_2(\mathrm{G_3})|+560|E_3(\mathrm{G_3})|+1584|E_4(\mathrm{G_3})|
+\\&&1190|E_5(\mathrm{G_3})|+3150|E_6(\mathrm{G_3})|+2000|E_7(\mathrm{G_3})|+
5040|E_8(\mathrm{G_3})|+\\&&11664|E_9(\mathrm{G_3})|
\end{eqnarray*}
By doing some calculation, we get\\
\begin{eqnarray*}
\Longrightarrow ReZG3(\mathrm{G_3})&=&4(50785-50608r+13692r^2)
\end{eqnarray*}
\end{proof}
\qed
\begin{table}[h]
\centering
\begin{tabular}{|c|c|c|c|c|}
   \hline
  $(\kappa_{x},\kappa_{y})$ where $\acute{p}\acute{q}\in E(\mathrm{G_3})$ & $\text{Number of edges}$&$(\kappa_{u},\kappa_{v})$ where $\acute{p}\acute{q}\in E(\mathrm{G_3})$&$\text{Number of edges}$ \\\hline
   $(22,22)$ & $2$&$(44,44)$ & $6r^2-36r+54$ \\\hline
   $(22,28)$ & $8$&$(44,124)$ & $8$ \\\hline
   $(22,63)$ & $4$&$(44,129)$ & $12$\\\hline
   $(25,33)$ & $4$& $(44,140)$ & $32r-128$\\\hline
   $(25,36)$ & $4$& $(44,156)$ & $12r^2-96r+192$\\\hline
    $(25,54)$ & $4$&$(54,63)$ & $4$ \\\hline
    $(25,63)$ & $4$&$(54,129)$ & $2$ \\\hline
   $(28,36)$ & $8r-20$&$(63,63)$ & $4r-10$ \\\hline
   $(28,63)$ & $8r-12$&$(63,124)$ & $8$\\\hline
   $(33,36)$ & $4$& $(63,129)$ & $4$\\\hline
   $(33,54)$ & $4$& $(63,140)$ & $8r-32$\\\hline
   $(33,129)$ & $4$&$(124,140)$ & $4$ \\\hline
   $(36,36)$ & $8r-22$&$(129,140)$ & $4$\\\hline
     $(36,44)$ & $8r-24$&$(129,156)$ & $2$\\\hline
   $(36,63)$ & $16r-40$& $(140,140)$ & $4r-18$\\\hline
   $(36,124)$ & $16$& $(140,156)$ & $8r-36$\\\hline
   $(36,129)$ & $8$&$(156,156)$ & $3r^2-28r+65$ \\\hline
   $(36,140)$ & $16r-64$& &\\\hline
\end{tabular}
\vspace{.2cm}
\caption{Edge partition of Hex Derived network of type 3 $HDN3(r)$ based on degrees of end vertices of each edge.}
\end{table}
Now, we compute $ABC_{4}$ and $GA_{5}$ indices of triangular hex derived network $\mathrm{G_3}$.
\begin{theorem}
Let $\mathrm{G_3}$ be the triangular hex derived network, then\\ \\
$\bullet$ $ABC_{4}(\mathrm{G_3})$ = $22.459 +8\sqrt{\frac{26}{55}}(-4+r)+4\sqrt{\frac{58}{105}}(-4+r)+\frac{4}{7}\sqrt{\frac{67}{15}}(-4+r)+3\sqrt{\frac{6}{13}} (-4+r)^2+2\sqrt{\frac{26}{33}}(-3+r)+\frac{3}{11}\sqrt{\frac{43}{2}}(-3+r)^2+\sqrt{\frac{14}{65}}(-9+2r)+\frac{1}{35}\sqrt{\frac{139}{2}}
(-9+2r)+\frac{1}{3}\sqrt{\frac{62}{7}}(-5+2r)+\frac{4}{63}\sqrt{31}(-5+2r)+\frac{4}{9}\sqrt{\frac{97}{7}}(-3+2r)+\frac{2}{21}
\sqrt{89}(-3+2r)+\frac{1}{9}\sqrt{\frac{35}{2}}(-11+4r)+\frac{1}{78}\sqrt{\frac{155}{2}}(65-28r+3r^2)$; \\ \\
$\bullet$ $GA_5(\mathrm{G_3})$ = $173.339+\frac{96}{29}\sqrt{5}(-4+r)+\frac{24}{11}\sqrt{35}(-4+r)+\frac{32}{23}\sqrt{385}(-4+r)+\frac{12}{25}\sqrt{429}(-4+r)^2+\frac{12}{5} \sqrt{11}(-3+r)-48r+9r^2+\frac{4}{37}\sqrt{1365}(-9+2r)+\frac{3}{2}\sqrt{7}(-5+2r)+\frac{48}{13}(-3+2r)+\frac{32}{11}\sqrt{7}(-3+2r)$.
\end{theorem}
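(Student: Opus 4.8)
The plan is to reproduce, for $\mathrm{G_3}=RHDN3(r)$, the template already carried out for $\mathrm{G_1}$ and $\mathrm{G_2}$: reduce each index to a finite weighted sum over an edge partition organized by the pair $(S_{\acute{p}},S_{\acute{q}})$ of neighbor-degree sums at the two endpoints of each edge. Since only degrees $4,7,10,18$ occur, the values $S_{\acute{p}}=\sum_{\acute{q}\in N(\acute{p})}\kappa(\acute{q})$ take a controlled list of integers ($22,25,28,33,36,44,54,63,124,129,140,156,\dots$), and every edge is classified by the unordered pair of these sums. This classification is exactly the content of Table 6. Once that table is in hand, $ABC_4(\mathrm{G_3})$ follows by substituting each class into $(7)$ and $GA_5(\mathrm{G_3})$ by substituting into $(8)$, weighting each summand by the cardinality $|E_j(\mathrm{G_3})|$ and summing.

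Concretely, I would first determine $S_{\acute{p}}$ for each structural type of vertex in $RHDN3(r)$, reading off from the drawing how many neighbors of each degree a given vertex has; this converts the degree-based adjacency data into the $S$-values appearing in Table 6. Next I would verify that the multiplicities in Table 6 are consistent, for instance by checking $\sum_j |E_j(\mathrm{G_3})|$ against the total edge count implied by the degree partition in Table 5 (the nine classes there, summed, give $|E(\mathrm{G_3})|$, which the $S$-partition must match). With the partition validated, I would plug each pair $(S_{\acute{p}},S_{\acute{q}})$ into $\sqrt{(S_{\acute{p}}+S_{\acute{q}}-2)/(S_{\acute{p}}S_{\acute{q}})}$ and into $2\sqrt{S_{\acute{p}}S_{\acute{q}}}/(S_{\acute{p}}+S_{\acute{q}})$, obtaining a table of scalar coefficients (as displayed in the long $ABC_4$ and $GA_5$ expansions), and then collect terms. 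The linear, quadratic, and constant parts of each $|E_j|$ regroup into the stated polynomial-plus-radical closed forms.

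The main obstacle is \emph{establishing Table 6 rigorously}, that is, computing each $S_{\acute{p}}$ and the cardinality of each $(S_{\acute{p}},S_{\acute{q}})$-class from the geometry of $RHDN3(r)$. This is the genuinely structural step: boundary and corner vertices of the rectangular region have smaller neighbor-degree sums than interior vertices, so the partition splits into many small constant-sized boundary classes and a few quadratic-sized bulk classes (such as $(44,44)$, $(156,156)$, and $(36,63)$-type edges), and one must count each correctly as a polynomial in $r$. By contrast, the final algebraic collapse into the claimed expressions is routine but bookkeeping-intensive; I would treat the constant term (here reported numerically as $22.459$ for $ABC_4$ and $173.339$ for $GA_5$) as the aggregated contribution of the constant-multiplicity boundary classes, and confirm the coefficients of $r$ and $r^2$ by matching against the leading bulk classes, which is where any transcription slip in the table would be caught.
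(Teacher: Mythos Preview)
Your proposal is correct and follows essentially the same approach as the paper: both rely on the $(S_{\acute{p}},S_{\acute{q}})$-based edge partition of $RHDN3(r)$ recorded in Table~6, substitute each class into formulas~(7) and~(8) with the corresponding multiplicity $|E_j(\mathrm{G_3})|$, and then collect terms into the stated closed forms. The paper simply cites Table~6 and writes out the weighted sums explicitly before simplifying, whereas you additionally sketch how one would derive and cross-check that table; but the underlying argument is identical.
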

\begin{proof}
By using edge partition given in Table 4, the $ABC_{4}(\mathrm{G_3})$ can be calculated by using $(7)$ as follows.
$$ABC_{4}(\mathrm{G})=\sum_{\acute{p}\acute{q} \in \mathrm{E}(\mathrm{G})}\sqrt{\frac{S_{\acute{p}}+S_{\acute{q}}-2}{S_{\acute{p}}S_{\acute{q}}}}=\sum_{j=10}^{44}\sum_{\acute{p}\acute{q} \in E_j(\mathrm{G_3})}\sqrt{\frac{S_{\acute{p}}+S_{\acute{q}}-2}{S_{\acute{p}}S_{\acute{q}}}}$$
\begin{eqnarray*}
ABC_4(\mathrm{G_3})&=&\frac{1}{11}\sqrt{\frac{21}{2}}|E_{10}(\mathrm{G_3})|+
\sqrt{\frac{6}{77}}|E_{11}(\mathrm{G_3})|+\frac{1}{3}\sqrt{\frac{83}{154}}|E_{12}(\mathrm{G_3})|+\\&&
\frac{1}{5}\sqrt{\frac{46}{33}}|E_{13}(\mathrm{G_3})|+\frac{1}{30}\sqrt{59}|E_{14}(\mathrm{G_3})|+
\frac{1}{15}\sqrt{\frac{77}{6}}|E_{15}(\mathrm{G_3})|+\\&&
\frac{1}{15}\sqrt{\frac{86}{7}}|E_{16}(\mathrm{G_3})|+\frac{1}{6}\sqrt{\frac{31}{14}}|E_{17}(\mathrm{G_3})|+
\frac{1}{42}\sqrt{89}|E_{18}(\mathrm{G_3})|+\\&&
\frac{1}{6}\sqrt{\frac{67}{33}}|E_{19}(\mathrm{G_3})|+\frac{1}{9}\sqrt{\frac{85}{22}}|E_{20}(\mathrm{G_3})|+
\frac{4}{3}\sqrt{\frac{10}{473}}|E_{21}(\mathrm{G_3})|+\\&&
\frac{1}{18}\sqrt{\frac{35}{2}}|E_{22}(\mathrm{G_3})|+\frac{1}{2}\sqrt{\frac{13}{66}}|E_{23}(\mathrm{G_3})|+
\frac{1}{18}\sqrt{\frac{97}{7}}|E_{24}(\mathrm{G_3})|+\\&&\frac{1}{6}\sqrt{\frac{79}{62}}|E_{25}(\mathrm{G_3})|
+\frac{1}{6}\sqrt{\frac{163}{129}}|E_{26}(\mathrm{G_3})|
+\frac{1}{2}\sqrt{\frac{29}{210}}|E_{27}(\mathrm{G_3})|+\\&&
\frac{1}{22}\sqrt{\frac{43}{2}}|E_{28}(\mathrm{G_3})|
+\frac{1}{2}\sqrt{\frac{83}{682}}|E_{29}(\mathrm{G_3})|
+\frac{1}{2}\sqrt{\frac{57}{473}}|E_{30}(\mathrm{G_3})|
+\\&&\frac{1}{2}\sqrt{\frac{13}{110}}|E_{31}(\mathrm{G_3})|
+\frac{1}{2}\sqrt{\frac{3}{26}}|E_{32}(\mathrm{G_3})
+\frac{1}{9}\sqrt{\frac{115}{42}}|E_{33}(\mathrm{G_3})
+\\&&\frac{1}{9}\sqrt{\frac{181}{86}}|E_{34}(\mathrm{G_3})|
+\frac{1}{63}\sqrt{31}|E_{35}(\mathrm{G_3})|
+\frac{1}{6}\sqrt{\frac{185}{217}}|E_{36}(\mathrm{G_3})|
+\\&&\frac{1}{3}\sqrt{\frac{190}{903}}|E_{37}(\mathrm{G_3})|
+\frac{1}{14}\sqrt{\frac{67}{15}}|E_{38}(\mathrm{G_3})|
+\frac{1}{2}\sqrt{\frac{131}{2170}}|E_{39}(\mathrm{G_3})|
+\\&&\frac{1}{2}\sqrt{\frac{89}{1505}}|E_{40}(\mathrm{G_3})
+\frac{1}{70}\sqrt{\frac{283}{559}}|E_{41}(\mathrm{G_3})
+\frac{1}{70}\sqrt{\frac{139}{2}}|E_{42}(\mathrm{G_3})|
+\\&&\frac{1}{2}\sqrt{\frac{7}{130}}|E_{43}(\mathrm{G_3})|
+\frac{1}{78}\sqrt{\frac{155}{2}}|E_{44}(\mathrm{G_3})|
\end{eqnarray*}
By doing some calculation, we get\\
\begin{eqnarray*}
\Longrightarrow ABC_4(\mathrm{G_3})&=&22.459 +8\sqrt{\frac{26}{55}}(-4+r)+4\sqrt{\frac{58}{105}}(-4+r)+\frac{4}{7}\sqrt{\frac{67}{15}}\\&&(-4+r)+3\sqrt{\frac{6}{13}} (-4+r)^2+2\sqrt{\frac{26}{33}}(-3+r)+\frac{3}{11}\sqrt{\frac{43}{2}}\\&&(-3+r)^2+\sqrt{\frac{14}{65}}(-9+2r)+\frac{1}{35}\sqrt{\frac{139}{2}}
(-9+2r)+\frac{1}{3}\sqrt{\frac{62}{7}}\\&&(-5+2r)+\frac{4}{63}\sqrt{31}(-5+2r)+\frac{4}{9}\sqrt{\frac{97}{7}}(-3+2r)+\frac{2}{21}
\sqrt{89}\\&&(-3+2r)+\frac{1}{9}\sqrt{\frac{35}{2}}(-11+4r)+\frac{1}{78}\sqrt{\frac{155}{2}}(65-28r+3r^2)
\end{eqnarray*}
The $GA_{5}(\mathrm{G_3})$ index can be calculated from (8) as follows.
\begin{eqnarray*}
GA_{5}(\mathrm{G})=\sum_{\acute{p}\acute{q} \in \mathrm{E}(\mathrm{G})}\frac{2\sqrt{S_{\acute{p}}S_{\acute{q}}}}{(S_{\acute{p}}+S_{\acute{q}})}=\sum_{j=10}^{44}\sum_{\acute{p}\acute{q} \in E_j(\mathrm{G_3})}\frac{2\sqrt{S_{\acute{p}}S_{\acute{q}}}}{(S_{\acute{p}}+S_{\acute{q}})}
\end{eqnarray*}
\begin{eqnarray*}
GA_5(\mathrm{G_3})&=&1|E_{10}(\mathrm{G_3})|+\frac{2}{25}\sqrt{154}|E_{11}(\mathrm{G_3})|+
\frac{6}{85}\sqrt{154}|E_{12}(\mathrm{G_3})|+\\&&
\frac{5}{29}\sqrt{33}|E_{13}(\mathrm{G_3})|+\frac{60}{61}|E_{14}(\mathrm{G_3})|+
\frac{30}{79}\sqrt{6}|E_{15}(\mathrm{G_3})|+\\&&
\frac{15}{44}\sqrt{7}|E_{16}(\mathrm{G_3})|+\frac{3}{8}\sqrt{7}|E_{17}(\mathrm{G_3})|+
\frac{12}{13}|E_{18}(\mathrm{G_3})|
+\frac{4}{23}\sqrt{33}|E_{19}(\mathrm{G_3})|+\\&&\frac{6}{29}\sqrt{22}|E_{20}(\mathrm{G_3})|+
\frac{1}{27}\sqrt{473}|E_{21}(\mathrm{G_3})|
+1|E_{22}(\mathrm{G_3})|+\\&&\frac{3}{10}\sqrt{11}|E_{23}(\mathrm{G_3})|+
\frac{4}{11}\sqrt{7}|E_{24}(\mathrm{G_3})|+
\frac{3}{20}\sqrt{31}|E_{25}(\mathrm{G_3})|+\\&&\frac{4}{55}\sqrt{129}|E_{26}(\mathrm{G_3})|+
\frac{3}{22}\sqrt{35}|E_{27}(\mathrm{G_3})|
+|E_{28}(\mathrm{G_3})|
+\\&&\frac{1}{21}\sqrt{341}|E_{29}(\mathrm{G_3})|
+\frac{4}{173}\sqrt{1419}|E_{30}(\mathrm{G_3})|
+\frac{1}{23}\sqrt{385}|E_{31}(\mathrm{G_3})|
+\\&&\frac{1}{25}\sqrt{429}|E_{32}(\mathrm{G_3})|
+\frac{2}{13}\sqrt{42}|E_{33}(\mathrm{G_3})|
+\frac{6}{61}\sqrt{86}|E_{34}(\mathrm{G_3})|
+1|E_{35}(\mathrm{G_3})|
+\\&&\frac{12}{187}\sqrt{217}|E_{36}(\mathrm{G_3})|
+\frac{1}{32}\sqrt{903}|E_{37}(\mathrm{G_3})|
+\frac{12}{29}\sqrt{5}|E_{38}(\mathrm{G_3})|
+\\&&\frac{1}{33}\sqrt{1085}|E_{39}(\mathrm{G_3})|
+\frac{4}{269}\sqrt{4515}|E_{40}(\mathrm{G_3})|
+\frac{4}{95}\sqrt{559}|E_{41}(\mathrm{G_3})|
+\\&&1|E_{42}(\mathrm{G_3})|
+\frac{1}{37}\sqrt{1365}|E_{43}(\mathrm{G_3})|
+1|E_{44}(\mathrm{G_3})|
\end{eqnarray*}
By doing some calculation, we get\\
\begin{eqnarray*}
\Longrightarrow GA_5(\mathrm{G_3})&=& 173.339+\frac{96}{29}\sqrt{5}(-4+r)+\frac{24}{11}\sqrt{35}(-4+r)+\frac{32}{23}\sqrt{385}(-4+r)+\\&&\frac{12}{25}\sqrt{429}(-4+r)^2+\frac{12}{5} \sqrt{11}(-3+r)-48r+9r^2+\\&&\frac{4}{37}\sqrt{1365}(-9+2r)+\frac{3}{2}\sqrt{7}(-5+2r)+\frac{48}{13}(-3+2r)+\\&&\frac{32}{11}\sqrt{7}(-3+2r)
\end{eqnarray*}
\end{proof}
\section{Conclusion}
In this paper, we have studied newly formed third type of hex derived networks, $HDN3$, $THDN3$ and $RHDN3$. The exact results have been computed of Randi$\acute{c}$, Zagreb, Harmonic, Augmented Zagreb, atom-bond connectivity and Geometric-Arithmetic indices for the very first time of third type of hex-derived networks also find the numerical computation for all the networks. As these important results are help in many chemical point of view as well as for pharmaceutical sciences. We are looking forward in future to derived and compute new networks and topological indices.


\begin{thebibliography}{999}
\bibitem{Baca1}
M. Ba$\check{c}$a, J. Horv$\acute{a}$thov$\acute{a}$, M. Mokri$\check{s}$ov$\acute{a}$, A. Semani\v{c}ov\'{a}-Fe\v{n}ov\v{c}\'{i}kov\'{a}, A. Suh$\acute{a}$nyiov$\check{a}$, On topological indices of carbon nanotube network, Canadian J. Chem. 93 (2015), 1-4.

\bibitem{baig1}
A. Q. Baig, M. Imran, H. Ali, Computing Omega, Sadhana and PI polynomials of benzoid carbon nanotubes, Optoelectron. Adv. Mater. Rapid Communin. $\textbf{9}(2015)$, $248- 255$.

\bibitem{baig}
A. Q. Baig, M. Imran, H. Ali, On Topological Indices of Poly Oxide, Poly Silicate, DOX and DSL Networks, Canad. J. Chem., DOI:10.1139/cjc-2014-0490

\bibitem{Balaban1}
A. T. Balaban, Highly discriminating distance-based topological index, Chem. Phys. Lett. $\textbf{89}(1982)$, $399-404$.

\bibitem{Balaban2}
A. T. Balaban, L. V. Quintas, The smallest graphs, trees, and 4-trees with degenerate topological index J. Math. Chem. $\textbf{14}(1983)$, $213-233$.

\bibitem{Bondy}
J. A. Bondy, U. S. R. Murty, Graph Theory with Applications, Macmilan, New York, $1997$.

\bibitem{Caporossi}
G. Caporossi, I. Gutman, P. Hansen, L. Pavlov\'{\i}c, Graphs with maximum connectivity index, Comput. Bio. Chem. $\textbf{27}(2003)$, $85-90$.

\bibitem{Chen}
M. S. Chen, K. G. Shin, D. D. Kandlur, Addressing, routing, and broadcasting in hexagonal mesh multiprocessors, IEEE Trans. Comput. $\textbf{39}(1990), 10–18$.


\bibitem{Diudea}
M. V. Diudea, I. Gutman, J. Lorentz, Molecular Topology, Nova, Huntington, $2001.$

\bibitem{Estrada}
E. Estrada, L. Torres, L. Rodr\'{i}guez, I. Gutman, An atom-bond connectivity index: Modelling the enthalpy of formation of alkanes, Indian J. Chem. $\textbf{37A}(1998)$, $849 - 855$.

\bibitem{Furtula1}
B. Furtula, I. Gutman, Relation between second and third geometric-arithmetic indices of trees, J. of Chemometrics $\textbf{25}(2011)$, $87-91$.

\bibitem{Furtula2}
B. Furtula, I. Gutman, A forgotten topological index, J. Math. Chem. $\textbf{53(4)}(2015)$, $1184-1190$.

\bibitem{Graovac1}
M. Ghorbani, M. A. Hosseinzadeh, Computing $ABC_{4}$ index of nanostar dendrimers, Optoelectron. Adv. Mater. Rapid Commun.
$\textbf{4}(2010)$, $1419 - 1422$.

\bibitem{Graovac2}
A. Graovac, M. Ghorbani, M. A. Hosseinzadeh, Computing fifth geometric-arithmetic index for nanostar dendrimers, J. Math. Nanosci. $\textbf{1}(2011), 33−42$.

\bibitem{Gutman}
I. Gutman, O. E. Polansky,  Mathematical concepts in organic chemistry, Springer-Verlag, New York, $1986$.

\bibitem{Gutman1}
I. Gutman, B. Ruscic, N. Trinajsti$\acute{c}$, CF Wilcox Jr, Graph theory and molecular orbitals. XII. Acyclic polyenes, The Journal of Chemical Physics, \textbf{62(9)}(1975), 3399-3405.

\bibitem{Furtula}
B. Furtula, A. Graovac and D. Vuki\v{c}evi\'{c}, Augmented Zagreb index, J. Math. Chem. $\textbf{48}(2010), 370-380$.

\bibitem{Imran}
M. Imran, A.Q. Baig, H. Ali, On topological properties of dominating David derived networks, Canadian Journal of Chemistry, \textbf{94(2)}(2015), 137-148.

\bibitem{Imran1}
M. Imran, A.Q. Baig, S.U. Rehman, H. Ali, R. Hasni, Computing topological polynomials
of mesh-derived networks, Discrete Mathematics, Algorithms and Application, \textbf{10(6)}(2018), 1850077.

\bibitem{Imran2}
M. Imran, A.Q. Baig, H.M.A. Siddiqui, R. Sarwar, On molecular topological properties of diamond like networks, Canadian Journal of Chemistry, \textbf{95(7)}(2017), 758-770.

\bibitem{chachaIranmanesh}
A. Iranmanesh, M. Zeraatkar, Computing GA index for some nanotubes, Optoelectron. Adv. Mater. Rapid Commun.
$\textbf{4}(2010)$, $1852-1855$.

\bibitem{Liu}
J.B. Liu, H. Ali, M.K. Shafiq, U. Munir, On Degree-Based Topological Indices of Symmetric
Chemical Structures, Symmetry, \textbf{10}(2018), 619; doi:10.3390/sym10110619.

\bibitem{Liu1}
J.B. Liu, M.K. Shafiq, H. Ali, A. Naseem, N. Maryam, S.S. Asghar, Topological Indices of mth Chain Silicate Graphs, Mathematics, \textbf{7}(2019), 42; doi:10.3390/math7010042.

\bibitem{Randic}
M. Randi\'{c}, On Characterization of molecular branching, J. Amer. Chem. Soc., $\textbf{97}(1975)$, $6609-6615$.

\bibitem{Ranjini}
P. S. Ranjini, V. Lokesha, A. Usha, Relation between phenylene and hexagonal squeez using harmonic index, Int J Graph Theory, $\textbf{1}(2013)$, $116-121$.

\bibitem{Simonraj}
F. Simonraj, A. George, Embedding of poly honeycomb networks and the metric dimension of star of david network, GRAPH-HOC, $4(2012), 11-28$.

\bibitem{Simonraj new}
F. Simonraj, A. George, on the Metric Dimension of HDN3 and PHDN3, IEEE International Conference on Power, Control, Signals and Instrumentation Engineering (ICPCSI), (2017), $1333-1336$.

\bibitem{Vukičević}
D. Vuki\v{c}evi\'{c} B. Furtula, Topological index based on the ratios of geometrical and arithmetical means of end-vertex degrees of
edges, J. Math. Chem., $\textbf{46}(2009)$, $1369-1376$.

\bibitem{Wiener}
H. Wiener, Structural determination of paraffin boiling points, J. Amer. Chem. Soc., $\textbf{69}(1947)$, $17-20$.

\bibitem{Zhong}
L. Zhong, The harmonic index on graphs, Appl. Math. Lett. $\textbf{25}(2012), 561-566$.

\end{thebibliography}
\end{document}